\documentclass[11pt]{amsart}
\usepackage{etex}
\usepackage{diagbox}
\usepackage{array}
\usepackage{graphicx, pictex}
\usepackage{stmaryrd}
\usepackage{changepage}
\usepackage{float}
\restylefloat{table}
\usepackage{multirow}
\usepackage[dvipsnames]{xcolor}
\usepackage{amsmath,amssymb,amsthm,mathrsfs}
\allowdisplaybreaks
\usepackage{yfonts}
% From Dan

%From Tyler

\renewcommand{\d}{\partial}

%\usepackage{showkeys}
%%%%%%%%%%%%%%%%%%%%%%%%%%%%%% 
%From Jake's thesis

%\def\be{{\mathbf e}}

%Old notation
\def\d{\Omega}

\def\du#1#2#3{\overset{#3}{\underset{#2}{#1}}}
\def\Forall{\quad \hbox{ for all }}

\def\M{{\mathcal{M}}}

\newcommand{\tn}[1]{\lVert\kern-1pt\lvert{#1}\rvert\kern-1pt\rVert}
\def\<{{\langle}}
\def\>{{\rangle}}
\def\Forall{\quad \hbox{ for all }}

%%%%%%%%%%%%%%%%%%%%%%%%%%%%%%%%%%%

%\def\be{{\mathbf e}}

%\def\du#1#2#3{\overset{#3}{\underset{#2}{#1}}}
%\newcommand{\curl}{\mbox{\bf curl}}
%\newcommand{\div}{\mbox{\bf div}}
%\newcommand{\grad}{\mbox{\bf \nabla}}

%%%%%%%%%%%%%%from PSUtalk

\def\d{\Omega}

\def\Forall{\quad \hbox{ for all }}

%% \def\equivc{\approx}

%\newcommand{\norm}[1]{\lVert{#1}\rVert}
%\newcommand{\nh}[1]{\lVert{#1}\rVert_h}

%  Notation for this paper

% 

\def\d{\Omega}

\def\Forall{\quad \hbox{ for all }}

%  C. notation

\def\tb#1{{\|\kern-1pt| #1 \|\kern-1pt|}}
\def\nm2#1#2{\|#1\|_{2,\d_{#2}}}

\def\R{\mathbb{R}}

%\def\K{\mathcal{K}}

%%%%%%%%%%%%%%%%%%
%Textclass specific LaTeX commands.
 \theoremstyle{plain}
 \newtheorem{thm}{Theorem}[section]
 \numberwithin{equation}{section} %% Comment out for sequentially-numbered
 \numberwithin{figure}{section} %% Comment out for sequentially-numbered
 \theoremstyle{plain}
 \newtheorem{prop}[thm]{Proposition}
 \theoremstyle{plain}
  %%Delete [thm] to re-start numbering
 \theoremstyle{plain}
 \newtheorem{theorem}[thm]{Theorem}
 \theoremstyle{plain}
 
\theoremstyle{plain}
 \newtheorem{remark}[thm]{Remark}
 \theoremstyle{plain}
 
%%%%%%%%%%%%%%%%%%%%%%%%%%%%%%%%%%%%%%%%%
%New macros

\def\M{{\mathcal{M}}}

\def\d{{\Omega}}

\def\Forall{\quad \hbox{ for all }}
%%%%%%%%%%%%%%%%%%%%%

%%%%%%%%%%%%%%%%%%%%%%%%%%%%%%%%%%%

\def\<{{\langle}}
\def\>{{\rangle}}
\def\R{\mathbb{R}}

\def\du#1#2#3{\overset{#3}{\underset{#2}{#1}}}

% The amssymb package provides various useful mathematical symbols
\usepackage{amssymb}

\begin{document}

\title[Optimal Approximation for Convection-Diffusion]
{Comparison of variational discretizations for a convection-diffusion problem}
%{Optimal trial norm analysis for  a model  convection-diffusion problem}

\author{Constantin Bacuta}
\address{University of Delaware,
Mathematical Sciences,
501 Ewing Hall, 19716}
\email{bacuta@udel.edu}

\author{Cristina Bacuta}
\address{University of Delaware,
Mathematical Sciences,
501 Ewing Hall, 19716}
\email{crbacuta@udel.edu}

\author{Daniel Hayes}
\address{University of Delaware,
Department of Mathematics,
501 Ewing Hall 19716}
\email{dphayes@udel.edu}

\keywords{finite element, Petrov-Galerkin, upwinding, convection dominated problem, singularly perturbed problems}

\subjclass[2010]{35K57, 35Q35, 65F, 65H10, 65N12, 65N22, 65N30, 74S05, 76R50}
\thanks{The work was supported  by NSF-DMS 2011615}%
%35K57 Reaction-diffusion equations, 
%35Q35 PDEs in connection with fluid mechanics
%65F(1973–now) Numerical linear algebra
%65H10(1973–now)Numerical computation of solutions to systems of equations
%65N06 (1991–now)Finite difference methods for boundary value problems involving PDEs
%65N12 (1991–now)Numerical solution of discretized equations for boundary value problems involving PDEs 
%65N22 (1991–now)Numerical solution of discretized equations for boundary value problems involving PDEs 
%65N30(1973–now)Finite element, Rayleigh-Ritz and Galerkin methods for boundary value problems involving PDEs
%74S05(2000–now)Finite element methods applied to problems in solid mechanics
%76(1940–now)Fluid mechanics For general continuum mechanics, see 74Axx, or other parts of 74-XX
%76R(1980–now)Diffusion and convection
%76R10 Free convection
%76R50 Diffusion

\begin{abstract} For a model convection-diffusion problem, we obtain new error estimates for a general upwinding  finite element discretization based on bubble modification of the test space.  The key analysis tool is based on finding  representations of the optimal norms on the trial spaces at the continuous and discrete levels. We analyze and compare the standard linear discretization, the saddle point least square and  upwinding Petrov-Galerkin methods. We conclude that the bubble upwinding Petrov-Galerkin method is the  most performant discretization for the one dimensional model. Our results  for the model convection-diffusion problem can be extended for creating new and efficient discretizations for the  multidimensional cases.

%We present new error analysis results for the UPG method and discuss the ad- vantages and disadvantages of the two mixed methods.

%connect the method with the general upwinding approach used in finite difference discretization. %Using the concept of optimal trial norm we  provide  new error estimates  based on the connection between the two methods. 

\end{abstract}
\maketitle

%%%%%%%%%%%%%%%%%            Introduction 
\section{Introduction}

%CRD-results, connections4CD

%We improve on results in \cite{CRD-results, connections4CD}. 
We consider  the model of a  singularly perturbed  convection diffusion problem: 
Given data represented by $f\in L^2(\Omega)$, we look for  a solution to the problem 
\begin{equation}\label{eq:2d-model}
  \left\{
\begin{array}{rcl}
     -\varepsilon\Delta u +b\cdot \nabla u & =\ f & \mbox{in} \ \ \ \Omega,\\
      u & =\ 0 & \mbox{on} \ \partial\Omega,\\ 
\end{array} 
\right. 
\end{equation} 
for a  positive constant  $\varepsilon$ and  a bounded domain $\Omega\subset \R^d$. We assume $\varepsilon \ll1$, and $b$ is a given vector chosen such that a unique solution exists. 

For the one dimensional case, we assume that $f$ is a continuous  function on $[0, 1]$, and we look for a solution  $u=u(x)$  such that
\begin{equation}\label{eq:1d-model}
\begin{cases}-\varepsilon u''(x)+ b\, u'(x)=f(x),& 0<x<1\\ u(0)=0, \ u(1)=0,\end{cases}
\end{equation} 
where $b$ is a positive constant. %In this paper, we consider the convection dominated case, i.e.  $\varepsilon \ll 1$. Here, the function $f$ is given and assumed to be continuous  on $[0, 1]$.  
Without loss of generality, we  will further  assume that $b=1$. 
The model problems \eqref{eq:2d-model}   and  \eqref{eq:1d-model} arise  when solving   heat transfer problems on thin domains, as well as when using small step sizes in implicit time discretizations of parabolic convection diffusion type problems, see \cite{Lin-Stynes12}. The solutions of these two  problems are characterized by  boundary layers,  see e.g., % of width $\mathcal{O}(\varepsilon^{1/2}\ln(1/\varepsilon))$ 
\cite{ EEHJ96,  linssT10, Roos-Schopf15, zienkiewicz2014}. 
Approximating such solutions poses  numerical  challenges due to the $\varepsilon$-dependence of the  stability constants and  of the  error estimates.  
There is a tremendous amount of literature addressing  these types of problems, see e.g.  \cite{EEHJ96, linssT10, quarteroni-sacco-saleri07, Roos-Schopf15, zienkiewicz2014, Dahmen-Welper-Cohen12, CRD-results, CB2}. 
In  this paper we analyze  mixed variational discretizations of the model convection diffusion problem  \eqref{eq:1d-model}, based on the concept of optimal trial  norms  at the continuous and the discrete levels. The concept of optimal trial norm  was developed  and used before in e.g.,  \cite{BHJ21, BHJ22, CRD-results, Broersen-StevensonDPGcd14, Chan-Heuer-Bui-Demkowicz14, Dahmen-Welper-Cohen12, demkowicz-gopalakrishnanDPG10, Barrett-Morton81}. In this paper, for certain discrete test  spaces, we find new representations of such norms that allow  for  sharp error estimates  and new analysis for saddle point or mixed variational formulations.  

We start by  reviewing the standard finite element discretization and two mixed variational formulations  that are known as the Saddle Point Least Square (SPLS) and the Upwinding Petrov-Galerkin (UPG) methods.  We present  new error  analysis results  for the UPG method  and discuss the advantages and disadvantages of the two mixed methods.  The goal of the paper is to develop a set of tools and ideas  for robust discretization of \eqref{eq:1d-model} towards  building efficient new methods for the multidimensional version of convection dominated problems, such as \eqref{eq:2d-model}. 

%The rest of the  paper is organized as follows.  
In Section \ref{sec:ReviewLSPP}, we review  the main concepts and notation for the general standard and  mixed variational formulation  and discretization. 
The general concept of optimal trial space and the main related  results about optimal trial norms is reviewed in  Section \ref{ssec:ON}. We review approximation results for the  standard linear and SPLS discretizations of  \eqref{eq:1d-model} on uniformly distribute notes  in Section \ref{sec:Lin+SPLS}. We justify the {\it oscillatory behvior} of the SPLS method for certain data, in Section \ref{sec:behaveSPLS}. We  present a general approximation result for the UPG method  in  Section \ref{sec:PG}. In Section  \ref{sec:Gen-Bubbles}, we apply the general approximation result of Section \ref{sec:PG} to particular test spaces constructed   with quadratic bubbles and exponential type bubbles. We present a summary of  the ideas and conclusion for the standard and mixed variational formulation of \eqref{eq:1d-model} in Section \ref{sec:conclusion}. 

\section{The general mixed variational approach}\label{sec:ReviewLSPP}
In this section we review  the main concepts and notation for the  mixed variational formulation and discretization. This includes  the  Saddle Point Least Squares (SPLS) method and the particular case of the Petrov-Galerkin (PG) discretization. We follow the  Saddle Point Least Squares (SPLS)  terminology that was introduced in  \cite{BJ-nc, BJprec, BJ-AA,  BJQS17, BQ15, BQ17}. 
%SUBSECTION
\subsection{The abstract variational formulation at the continuous level}  \label{subsec:cont}
We consider the  abstract  mixed  formulation: % that  generalizes the formulation \eqref{VF1d}:
Find $u \in Q$ such that
 \begin{equation}\label{VFabstract}
b(v,u) = \<F, v\>, \ \text{for all} \ v \in V.
\end{equation}
where $b(\cdot,\cdot)$ is a bilinear form, $Q$ and $V$ are possible different  separable Hilbert spaces, and $F$ is a continuous linear functional on $V$. %, and  $\<\cdot, \cot\>$ i s the $V^* \times V$ duality. 
We denote the dual of $V$ by $V^*$ and the dual pairing on $V^* \times V$ by $\langle \cdot, \cdot \rangle$.
We assume that the inner products $a_0(\cdot, \cdot)$ and $(\cdot, \cdot)_{{Q}}$ induce the  norms $|\cdot|_V =|\cdot| =a_0(\cdot, \cdot)^{1/2}$ and $\|\cdot\|_{Q}=\|\cdot\|=(\cdot, \cdot)_{Q}^{1/2}$. 
The bilinear form  $b(\cdot, \cdot)$ is a continuous bilinear form on $V\times Q$ satisfying
%\begin{equation}\label{sup-sup_a}
%\du{\sup}{u \in Q}{} \ \du {\sup} {v \in V}{} \ \frac {b(v, u)}{|v|\,\|u\|} =M <\infty, 
%\end{equation}
 the $\sup-\sup$ condition
 \begin{equation} \label{sup-sup_a}
\du{\sup}{u \in Q}{} \ \du {\sup} {v \in V}{} \ \frac {b(v, u)}{|v|\,\|u\|} =M <\infty, 
\end{equation} 
and the $\inf-\sup$ condition
 \begin{equation} \label{inf-sup_a}
 \du{\inf}{u \in Q}{} \ \du {\sup} {v \in V}{} \ \frac {b(v, u)}{|v|\,\|u\|} =m>0.
\end{equation}

 We  assume that the functional  $F \in V^*$ satisfies the {\it compatibility condition} 
\begin{equation}\label{eq:BBsuf}
\<F,v\> =0 \Forall v \in V_0:= \{v \in V |\  b(v, q)=0 \,  \Forall   q \in Q\}. 
\end{equation}
  
The following existence and the uniqueness  result for  \eqref{VFabstract} can be found in e.g.,  \cite{A-B, B09, boffi-brezzi-demkowicz-duran-falk-fortin2006, boffi-brezzi-fortin}. 
\begin{prop} \label{prop:well4mixed} If the form $b(\cdot,\cdot)$ satisfies \eqref{sup-sup_a} and \eqref{inf-sup_a}, and the  data $F \in V^*$ satisfies the {\it compatibility condition}  \eqref{eq:BBsuf}, then  the problem \eqref{VFabstract} has a unique solution that depends continuously on the data $F$. 
\end{prop}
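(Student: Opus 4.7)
The plan is to recast \eqref{VFabstract} as a linear operator equation and deduce well-posedness from the closed range theorem. Introduce the operator $B : Q \to V^*$ defined by $\langle Bu, v\rangle = b(v,u)$; the continuity condition \eqref{sup-sup_a} is exactly the statement that $B$ is bounded with $\|B\|_{Q\to V^*} \le M$, so \eqref{VFabstract} reads simply $Bu = F$ and the proposition reduces to showing that $F$ lies in $\mathrm{Range}(B)$ and that $B$ is injective with a continuous inverse on its range.

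First I would use \eqref{inf-sup_a} to extract injectivity, closed range, and stability simultaneously. Rewriting the inf-sup quantity as the dual norm of $Bu$,
\begin{equation*}
\|Bu\|_{V^*} \;=\; \sup_{v \in V} \frac{b(v,u)}{|v|} \;\ge\; m\,\|u\|,
\end{equation*}
so $B$ is bounded below by $m$. This yields injectivity, closure of the range, and the a priori estimate $\|u\| \le (1/m)\|F\|_{V^*}$ which, applied to the difference of two solutions, gives uniqueness and continuous dependence on $F$.

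Next I would identify $\mathrm{Range}(B)$ explicitly. Since $V$ and $Q$ are Hilbert (hence reflexive), the Banach closed range theorem gives $\mathrm{Range}(B) = (\ker B^*)^{\perp}$ in the $V^*$--$V$ duality, where the adjoint acts by $\langle B^* v, u\rangle = b(v, u)$. Thus $\ker B^*$ is precisely the space $V_0$ appearing in \eqref{eq:BBsuf}, and the compatibility hypothesis $\langle F, v\rangle = 0$ for all $v \in V_0$ is exactly the assertion that $F \in \mathrm{Range}(B)$. Combined with the injectivity and stability bound from the previous step, this produces the unique solution $u \in Q$ with $\|u\| \le (1/m)\|F\|_{V^*}$.

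I do not anticipate a substantive obstacle; the only subtlety is being careful with the reflexive identifications $V^{**}\equiv V$ and $Q^{**}\equiv Q$ used to invoke closed range on both $B$ and $B^*$. A more hands-on alternative, which sidesteps operator theory, is to split $V = V_0 \oplus V_0^{\perp}$ orthogonally (noting $V_0$ is closed as the zero set of a family of continuous functionals) and observe that \eqref{inf-sup_a} still holds with the sup restricted to $V_0^{\perp}$; one then defines $u$ via Riesz representation applied to the functional $v \mapsto \langle F, v\rangle$ on $V_0^{\perp}$, where the compatibility condition guarantees no information is lost in passing to the quotient.
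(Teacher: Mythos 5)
Your argument is correct and is exactly the standard Banach--Ne\v{c}as--Babu\v{s}ka/closed-range proof: the paper itself does not prove Proposition~\ref{prop:well4mixed} but defers to the cited references (Aziz--Babu\v{s}ka, Boffi--Brezzi--Fortin, etc.), where this is precisely the argument given --- boundedness below from \eqref{inf-sup_a} yields injectivity, closed range, and the stability bound $\|u\|\le m^{-1}\|F\|_{V^*}$, while the closed range theorem identifies $\mathrm{Range}(B)$ with the annihilator of $V_0=\ker B^*$, so \eqref{eq:BBsuf} is exactly surjectivity onto the admissible data. The only minor looseness is in your closing alternative: Riesz representation on $V_0^{\perp}$ alone does not hand you $u\in Q$; you would still need the surjectivity of $B$ viewed as a map into $(V_0^{\perp})^*$, which again rests on the restricted inf--sup condition together with the nondegeneracy $V_0\cap V_0^{\perp}=\{0\}$.
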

It is also known, see e.g., \cite{BM12, BQ15,BQ17, Dahmen-Welper-Cohen12}, that under the  {\it compatibility condition} \eqref{eq:BBsuf}, solving the mixed problem  \eqref{VFabstract} reduces to  solving a standard saddle point reformulation: Find $(w, u) \in V \times Q$ such that  
\begin{equation}\label{abstract:variational2}
\begin{array}{lclll}
a_0(w,v) & + & b( v, u) &= \langle F,v \rangle &\ \Forall  v \in V,\\
b(w,q) & & & =0   &\  \Forall  q \in Q.  
\end{array}
\end{equation}
In fact,  we have that $u$ is the unique solution of \eqref{VFabstract}  {\it if and only if} $(w=0 , u) $ solves  \eqref{abstract:variational2}.%, and the result remains valid if the form $a_0(\cdot, \cdot)$ in \eqref{abstract:variational2} is replaced by any other symmetric bilinear form  $a(\cdot, \cdot)$  on $V$ that leads to an equivalent norm on $V$. 

%%%%%%%%%%%
%SECTION 
%%%%%%%%%%%%%

%%%%%%%%%%%%%%%     SPLS Discretization
\subsection{PG and SPLS discretizations}\label{sec:SPLS-discretization}
%Since we discussed the possibility of having more than one  norm/inner product on $V$ 

Let $b(\cdot,\cdot):V\times Q\to\R$ be a bilinear form as defined in Section  \ref{subsec:cont}. %is given by the continuous bilinear form $a_0(\cdot, \cdot)$ that  leads to the  norm   $a_0(\cdot, \cdot)^{1/2}$ on $V$.
Let $V_h\subset V$ and  $\M_h\subset Q$ be finite dimensional approximation spaces.
% and $B_h:V_h\to \M_h$ defined  by
%\[(B_hv_h,q_h)=b(v_h,q_h)\,  \Forall v_h\in V_h,q_h\in \M_h. \]
 We assume that the following discrete $\inf-\sup$ condition holds for the pair of spaces $(V_h,\M_h)$:
 \begin{equation} \label{inf-sup_h}
\du{\inf}{u_h \in \M_h}{} \ \du {\sup} {v_h \in V_h}{} \ \frac {b(v_h, u_h)}{|v_h|\,\|u_h\|} =m_h>0.
\end{equation} 
We  define 
$
V_{h,0}:=\{v_h\in V_h\,|\, b(v_h,q_h)=0,\Forall q_h\in \M_h\},
$ 
 and let  $F_h \in V_h^*$ to be the restriction of $F$ to $V_h$, i.e.,   $\langle F_h, v_h \rangle:=\langle F, v_h \rangle$ for all $v_h \in V_h$. 
%\begin{remark}\label{well-posed_prop}
%In the case $V_{h,0} \subset V_0$, the compatibility condition \eqref{eq:BBsuf} implies 
Consider the following discrete compatibility condition, 
\begin{equation}\label{eq:BBsuf-h}
\langle F,v_h\rangle =0 \Forall v_h\in V_{h,0}.
\end{equation} 
As a direct consequence of proposition \eqref{prop:well4mixed} we have the following result.
\begin{prop} \label{prop:well4mixed-h} If the form $b(\cdot,\cdot)$ satisfies the condition \eqref{inf-sup_h} on $V_h \times \M_h$, and the  data $F_h \in V_h^*$ satisfies the {\it compatibility condition}  \eqref{eq:BBsuf-h}, then  the problem of   finding $u_h\in \M_h$ such that 
\begin{equation}\label{discrete_var_PG}
b(v_h, u_h)=\langle F,v_h\rangle,  \ v_h\in V_h,
\end{equation}
 has a unique solution $u_h \in \M_h$   that depends continuously on the data $F_h$. 
\end{prop}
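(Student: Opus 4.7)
The plan is to apply Proposition \ref{prop:well4mixed} directly, with $V$ replaced by $V_h$, $Q$ replaced by $\M_h$, and $F$ replaced by $F_h$, so the main task is to verify that all hypotheses of that proposition transfer to the discrete setting.

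First, I would observe that $V_h$ and $\M_h$, being finite-dimensional subspaces of $V$ and $Q$, are themselves separable Hilbert spaces under the restrictions of $a_0(\cdot,\cdot)$ and $(\cdot,\cdot)_Q$, and that $F_h$ is continuous on $V_h$ since it is the restriction of the continuous functional $F \in V^*$. The bilinear form $b(\cdot,\cdot)$ remains continuous on $V_h \times \M_h$ by restriction, and the discrete sup-sup condition
\[
\du{\sup}{u_h \in \M_h}{} \ \du{\sup}{v_h \in V_h}{} \ \frac{b(v_h,u_h)}{|v_h|\,\|u_h\|} \le M < \infty
\]
is inherited immediately from \eqref{sup-sup_a} via the inclusions $V_h \subset V$ and $\M_h \subset Q$.

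Next, the discrete inf-sup condition is exactly the assumption \eqref{inf-sup_h}, and the discrete compatibility of $F_h$ with the discrete kernel $V_{h,0}$ is precisely \eqref{eq:BBsuf-h}. With these three ingredients in place (continuity/sup-sup, inf-sup, and compatibility), Proposition \ref{prop:well4mixed} applied to the triple $(V_h, \M_h, F_h)$ yields a unique $u_h \in \M_h$ solving \eqref{discrete_var_PG}, together with continuous dependence on $F_h$, with the stability constant controlled by $1/m_h$.

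I do not anticipate a real obstacle here: the proposition is advertised as a direct corollary, and no new estimate or construction is required beyond checking that the hypotheses are inherited or explicitly assumed. The only subtlety worth mentioning in writing is that the stability constant in the discrete bound depends on $m_h$ rather than $m$, which is important for the later sections of the paper when $m_h$ may deteriorate with mesh parameters or with $\varepsilon$; this could be stated as a short remark at the end of the proof to foreshadow the role that sharp representations of optimal trial norms will play in controlling $m_h$.
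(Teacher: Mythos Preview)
Your proposal is correct and matches the paper's approach exactly: the paper does not give a written proof at all but simply states that the result is ``a direct consequence of proposition \eqref{prop:well4mixed},'' which is precisely the reduction you carry out. Your verification that the sup--sup, inf--sup, and compatibility hypotheses transfer to $(V_h,\M_h,F_h)$ is the natural (and only) thing to check, so nothing more is needed.
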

The variational formulation \eqref{discrete_var_PG} is the {\it Petrov-Galerkin}  (PG) discretization of \eqref{VFabstract}.
We note that for the case  $V_{h,0} =\{0\}$, the compatibility condition \eqref{eq:BBsuf-h} is trivially satisfied. In this case, assuming that $b(\cdot,\cdot)$ satisfies \eqref{inf-sup_h},  the discretization \eqref{discrete_var_PG} leads to a square linear system. Thus, we do not need to consider the SPLS discretization of \eqref{VFabstract}.

 In general, $V_{h,0}$ might not be a subset $V_0$. Consequently,  even though the continuous problem \eqref{VFabstract}  has unique solution, the discrete problem  \eqref{discrete_var_PG} might  not be  well-posed if $F_h$ does not satisfy the {\it compatibility condition}  \eqref{eq:BBsuf-h}. However, if the form $b(\cdot, \cdot)$ satisfies \eqref{inf-sup_h} on $V_h \times \M_h$, then the problem of finding $(w_h,u_h) \in V_h\times \M_h$ satisfying  
\begin{equation}\label{discrete:variationalSPP}
\begin{array}{lclll}
a_0(w_h,v_h) & + & b( v_h, u_h) &= \langle f,v_h \rangle &\ \Forall  v_h \in V_h,\\
b(w_h,q_h) & & & =0   &\  \Forall  q_h \in \M_h, 
\end{array}
\end{equation} 
 does have a unique solution. 
We call  the component $u_h$  of the solution $(w_h,u_h)$ 
of \eqref{discrete:variationalSPP} the  {\it saddle point least squares} approximation of the solution $u$ of the original mixed problem \eqref{VFabstract}.

The following error estimate for $\|u-u_h\|$ was proved in \cite{BQ15}. 

%Using  the extra compatibility condition  \eqref{eq:BBsuf},   a sharp error estimate for $\|p-p_h\|$ was proved in \cite{BQ15}  based on the  Xu-Zikatanov argument, see \cite{xu-zikatanov-BBtheory}. 
 
\begin{theorem}\label{th:sharpEE} 
Let $b:V \times Q \to \R$  satisfy \eqref{sup-sup_a} and \eqref{inf-sup_a} and assume that   ${F}  \in V^*$  is given and satisfies \eqref{eq:BBsuf}. Assume that  $u$  is the  solution  of \eqref{VFabstract} and  $V_h \subset V$,  $ {\M}_h \subset  Q$ are  chosen such that the discrete $\inf-\sup$ condition   \eqref{inf-sup_h} holds. If  $\left (w_h, u_h \right )$ is the  solution  of \eqref{discrete:variationalSPP}, then the following error estimate holds:
\begin{equation}\label{eq:er4LS} %\frac 1 M |w_h| \leq
\|u-u_h\| \leq  \frac{M}{m_h} \  \du{\inf}{q_h \in\M_h
}{}  \|u-q_h\|.
\end{equation} 
\end{theorem}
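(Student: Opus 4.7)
The plan is to mimic C\'ea's lemma adapted to the saddle point setting. Fix an arbitrary $q_h \in \M_h$; since $u_h - q_h \in \M_h$, the discrete inf-sup condition \eqref{inf-sup_h} yields
\[
m_h \|u_h - q_h\| \le \sup_{v_h \in V_h \setminus \{0\}} \frac{b(v_h, u_h - q_h)}{|v_h|}.
\]
The strategy is to replace $u_h - q_h$ by $u - q_h$ inside the supremum so that the continuity bound \eqref{sup-sup_a} finishes the job; a triangle inequality then yields \eqref{eq:er4LS}.

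To achieve this replacement, I would split $b(v_h, u_h - q_h) = b(v_h, u_h - u) + b(v_h, u - q_h)$ and derive a Galerkin-type orthogonality for the first term. Since $V_h \subset V$, testing \eqref{VFabstract} against $v_h$ gives $b(v_h, u) = \<F, v_h\>$, while the first equation of the discrete SPLS system \eqref{discrete:variationalSPP} gives $b(v_h, u_h) = \<F, v_h\> - a_0(w_h, v_h)$. Subtracting yields the error identity
\[
b(v_h, u_h - u) = - a_0(w_h, v_h) \quad \text{for all } v_h \in V_h.
\]
The second equation of \eqref{discrete:variationalSPP} forces $w_h \in V_{h,0}$, so that $a_0(w_h, v_h) = 0$ whenever $v_h$ lies in the $a_0$-orthogonal complement $V_{h,0}^{\perp}$ of $V_{h,0}$ inside $V_h$.

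Next, I would argue that the supremum above can be restricted to $v_h \in V_{h,0}^{\perp}$. Writing $v_h = v_{h,0} + v_h^{\perp}$ along the $a_0$-orthogonal decomposition $V_h = V_{h,0} \oplus V_{h,0}^{\perp}$, one has $b(v_{h,0}, u_h - q_h) = 0$ (because $u_h - q_h \in \M_h$) and $|v_h^{\perp}| \le |v_h|$, so replacing $v_h$ by $v_h^{\perp}$ can only increase the Rayleigh quotient. On $V_{h,0}^{\perp}$, the error identity collapses to $b(v_h^{\perp}, u_h - u) = 0$, and consequently
\[
m_h \|u_h - q_h\| \le \sup_{v_h^{\perp} \in V_{h,0}^{\perp} \setminus \{0\}} \frac{b(v_h^{\perp}, u - q_h)}{|v_h^{\perp}|} \le M \|u - q_h\|.
\]
A triangle inequality and passage to the infimum over $q_h$ then deliver \eqref{eq:er4LS}.

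The main subtlety is the identification $w_h \in V_{h,0}$: without it, the error identity would couple $u_h - u$ to the auxiliary variable $w_h$ and leave $|w_h|$ on the right-hand side of the estimate. Recognizing that the SPLS formulation places $w_h$ precisely in $V_{h,0}$ is what supplies the Galerkin orthogonality on $V_{h,0}^{\perp}$ and hence the clean $M/m_h$ dependence.
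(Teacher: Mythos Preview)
The paper does not actually prove this theorem; it is stated with a citation to \cite{BQ15}. Nonetheless, your argument has a genuine gap at the very last step. Everything up to
\[
m_h\,\|u_h - q_h\| \;\le\; M\,\|u - q_h\| \qquad (q_h \in \M_h \text{ arbitrary})
\]
is correct, and your restriction to $V_{h,0}^{\perp}$ to kill the $a_0(w_h,\cdot)$ term is exactly the right device. The problem is that ``a triangle inequality'' applied to this bound yields only
\[
\|u - u_h\| \;\le\; \|u - q_h\| + \|q_h - u_h\| \;\le\; \Bigl(1 + \tfrac{M}{m_h}\Bigr)\,\|u - q_h\|,
\]
which is the Babu\v{s}ka constant, not the sharp constant $M/m_h$ asserted in \eqref{eq:er4LS}.

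The missing idea (used in \cite{BQ15}) is the Xu--Zikatanov projection argument. Observe that the SPLS map $u \mapsto u_h$ is a linear projection $\Pi_h$ of $Q$ onto $\M_h$: if $u \in \M_h$ then $(0,u)$ solves \eqref{discrete:variationalSPP} with right-hand side $b(\cdot,u)$, so $\Pi_h u = u$. Your estimate with $q_h = 0$ then reads $\|\Pi_h\|_{\mathcal L(Q)} \le M/m_h$. Since $Q$ is Hilbert, Kato's identity $\|I - \Pi_h\| = \|\Pi_h\|$ for nontrivial idempotents gives, for every $q_h \in \M_h$,
\[
\|u - u_h\| \;=\; \|(I - \Pi_h)(u - q_h)\| \;\le\; \|\Pi_h\|\,\|u - q_h\| \;\le\; \tfrac{M}{m_h}\,\|u - q_h\|,
\]
which is \eqref{eq:er4LS}. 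So your decomposition $V_h = V_{h,0} \oplus V_{h,0}^{\perp}$ does all the real work; you only need to replace the final triangle inequality by this projection step.
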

 
%Note that the considerations made so far in this section remain valid  if the form $a_0(\cdot, \cdot)$, as an inner product on $V_h$, is replaced by another inner product $a(\cdot, \cdot)$ which gives rise to an equivalent  norm on $V_h$. 
%Certainly,  the definition of  $M_h$ and $m_h$,  will change accordingly with the new norm induced  by the inner product $a(\cdot, \cdot)$. In particular, the error estimate \eqref{eq:er4LS} remains valid with the corresponding new definition for the constant $m_h$.
\begin{remark}\label{th:PGerror}
We note that the estimate \eqref{eq:er4LS} holds true  if $u_h$ is in particular the unique PG solution of \eqref{discrete_var_PG}. This is due to the fact that, if $u_h$ is the solution of \eqref{discrete_var_PG}, then $(0, u_h)$ is the unique solution of \eqref{discrete:variationalSPP}. 
% is the  Petrov-Galerkin discreization of \eqref{VFabstract}.
\end{remark} 

For our analysis of the PG discretization of \eqref{eq:2d-model} we will  have a norm 
$ \|\cdot \|_*  $ on $Q$  and a different norm $ \|\cdot \|_{*,h}  $ on the discrete trial space $\M_h$. For this case,  the following  version of  the  Theorem \ref{th:sharpEE}  was proved in \cite{CRD-results}.

\begin{theorem}\label{th:ap-PG}
Let $|\cdot|$, $\|\cdot\|_{*}$ and $\|\cdot\|_{*,h}$ be the norms on $V,Q$, and $\M_h$, respectively, such that  they satisfy \eqref{sup-sup_a}, \eqref{inf-sup_a}, and \eqref{inf-sup_h}. Assume that for some constant $c_0>0$, we have
\begin{equation}\label{eq:c0}
\|v\|_* \leq c_0\|v\|_{*,h}\quad\quad\text{for all $v\in Q$}.
\end{equation}
Let $u$ be the solution of \eqref{VFabstract} and let  $u_h$ be  the unique solution of the problem \eqref{discrete_var_PG}. Then, the following error estimate holds:
\begin{equation}\label{eq:Approx-Opt}
\|u-u_h\|_{*,h}\leq c_0\, \frac{M}{m_h}\ \du{\inf}{p_h \in \M_h}{} \ \|u-p_h\|_{*,h}.
\end{equation}
\end{theorem}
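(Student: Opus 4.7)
The plan is to adapt the proof of Theorem~\ref{th:sharpEE} to the mismatched-norm setting. The key device is the Ritz-style projection induced by the PG problem, combined with the classical Kato identity $\|I - P\| = \|P\|$ for nontrivial projections on a Hilbert space (the same ingredient that produces the sharp constant in Theorem~\ref{th:sharpEE}).

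First I would introduce the discrete projection $\Pi_h$ associated with \eqref{discrete_var_PG}: on the subspace $\mathrm{span}\{u\} + \M_h$, define $\Pi_h q \in \M_h$ by
\[
b(v_h, \Pi_h q) = b(v_h, q) \qquad \text{for all } v_h \in V_h.
\]
By the discrete $\inf$-$\sup$ condition \eqref{inf-sup_h} together with the standing hypothesis that \eqref{discrete_var_PG} has a unique solution, $\Pi_h$ is well defined and linear on this subspace, with $\Pi_h u = u_h$ and $\Pi_h p_h = p_h$ for every $p_h \in \M_h$; in particular $\Pi_h$ is a projection onto $\M_h$.

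Next I would bound the operator norm of $\Pi_h$ in $\|\cdot\|_{*,h}$. Chaining the discrete $\inf$-$\sup$ bound, the definition of $\Pi_h$, the $\sup$-$\sup$ continuity \eqref{sup-sup_a}, and the norm-compatibility hypothesis \eqref{eq:c0} yields
\[
m_h \|\Pi_h q\|_{*,h} \leq \sup_{v_h \in V_h} \frac{b(v_h, \Pi_h q)}{|v_h|} = \sup_{v_h \in V_h} \frac{b(v_h, q)}{|v_h|} \leq M \|q\|_{*} \leq c_0 M \|q\|_{*,h},
\]
so $\|\Pi_h\|_{*,h} \leq c_0 M / m_h$. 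Applying the Kato identity then gives $\|I - \Pi_h\|_{*,h} \leq c_0 M / m_h$, and using the projection identity $u - u_h = (I - \Pi_h)(u - p_h)$, valid for any $p_h \in \M_h$, I would conclude
\[
\|u - u_h\|_{*,h} \leq \|I - \Pi_h\|_{*,h}\,\|u - p_h\|_{*,h} \leq \frac{c_0 M}{m_h}\,\|u - p_h\|_{*,h};
\]
taking the infimum over $p_h \in \M_h$ produces \eqref{eq:Approx-Opt}.

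The main obstacle is the sharpness of the constant. A direct triangle-inequality argument applied to the same bound on $\|u_h - p_h\|_{*,h}$ produces only the weaker constant $1 + c_0 M/m_h$; recovering the stated $c_0 M/m_h$ rests on the Kato identity, which in turn requires $\|\cdot\|_{*,h}$ to be induced by an inner product on the finite-dimensional subspace where $\Pi_h$ acts, and $\Pi_h$ to be nontrivial. In the concrete setting of Sections~\ref{sec:PG}--\ref{sec:Gen-Bubbles} this Hilbert structure is available, so the argument should go through without further complication.
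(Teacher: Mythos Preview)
The paper does not supply its own proof of this theorem; it simply cites \cite{CRD-results}. Your argument is correct and is precisely the Xu--Zikatanov sharp quasi-optimality argument: bound the Petrov--Galerkin projection in the discrete norm via the chain discrete inf--sup $\to$ Galerkin orthogonality $\to$ continuous sup--sup $\to$ norm comparison \eqref{eq:c0}, then invoke Kato's identity $\|I-\Pi_h\|=\|\Pi_h\|$ to eliminate the additive $1$ that a bare triangle-inequality argument would incur. You also correctly isolate the one hypothesis this needs beyond the literal statement, namely that $\|\cdot\|_{*,h}$ be induced by an inner product on the finite-dimensional subspace where $\Pi_h$ acts; in every application in this paper the discrete optimal norm is given by \eqref{eq:COptNorm-h} or \eqref{eq:COptNorm-hB}, which are visibly inner-product norms, so the argument goes through without modification.

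Since the companion estimate in Theorem~\ref{th:sharpEE} (also only cited, from \cite{BQ15}) already carries the sharp constant $M/m_h$ with no additive $1$, the proof in \cite{CRD-results} almost certainly uses the same Kato mechanism you employ. Your proposal can be taken as the proof.
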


%%%%%%%%%%%
%SECTION 
%%%%%%%%%%%%%

 %%%%%%%%%%%%%%
 %SECTION
 %%%%%%%%%%%
 
 \section{Optimal trial norm for the convection diffusion problem}\label{ssec:ON}
 We consider the variational formulation of \eqref{eq:2d-model}: 
 Find $u\in H_0^1(\Omega)$ such that 
\begin{equation}\label{eq:2d-model-V}
(\varepsilon \nabla u, \nabla v) + (b \cdot \nabla u, v) = (f, v)  \Forall v  \in  H^1_0(\Omega).
\end{equation}

Define $V=Q =H_0^1(\Omega)$  and  $b:V\times Q \to \mathbb{R}$, \ $F\in V^*$ by

\[
b(v,u):=(\varepsilon \nabla u, \nabla v)  + (b \cdot \nabla u, v),  \ \ \ \text{and} \ \ \ \langle F,v\rangle := (f,v).
\]

For the analysis purpose, we will allow different norms on the test and trial spaces.  
 On the test space  $V:=H_0^1(\Omega)$,  we consider the norm induced by % the {\it symmetric part inner product} 
$ a_0(u,v):=(\nabla u, \nabla v)$. 
%inspired by Sangalli/Verf\"{u}rth, 2005}, or \textcolor{blue}{Cohen-Dahmen-Welper, 2012}
We can represent the {\it antisymmetric part} in the {\it symmetric $a(\cdot ,\cdot)$} inner product. 
First, we define the  representation operator $T:Q \to Q$ by 
\[
a_0(Tu, v)  = (b \cdot \nabla u, v),  \Forall v\in V.
\]
In the multidimensional  case we have that
\[
|Tu| = \|b \cdot \nabla u\|_{H^{-1}(\Omega)} \leq \|b\| \|u\|_{L^2(\Omega)}.
\]
 For the one dimensional case and $b=1$, we have 
 \[
 -((Tu)'' , q)= a_0(Tu,q) = (u',q), \ \text{for all} \  q \in Q. 
 \]
By solving the corresponding differential equation, one can find that
\begin{equation}\label{eq:T-Action}
Tu = x\overline{u} - \int_0^xu(s)\,ds, 
\end{equation}
where $\overline{u}=  \int_0^1 u(s)\, ds$. Thus, $(Tu)'(x)= \overline{u} - u(x)$ and 
\begin{equation}\label{eq:T-NormSq}
|Tu|^2 = \int_0^1 |u(s)-\overline{u}|^2\, ds= \|u -\overline{u}\|^2= \|u\|^2 -\overline{u}^2\leq \|u\|^2.
\end{equation}

Next, the  optimal continuous trial norm on $Q$ is defined by 
\[
 \|u\|_{*}:= \du {\sup} {v \in V}{} \ \frac {b(v, u)}{|v|} =   \du {\sup} {v \in V}{} \ \frac {\varepsilon a_0(u, v) + a_0(Tu,v) }{|v|}.
\]
Using the Riesz representation theorem and the fact that $ a_0(Tu,u) =0$, we obtain that the 
optimal trial norm on $Q$  is given by
\begin{equation}\label{eq:COptNormd}
\|u\|_{*}^2  =\varepsilon^2|u|^2 +|Tu|^2.
\end{equation}
 Thus, we have 
  \[
\|u\|^2_*:=\epsilon^2 (\nabla u, \nabla u) + \|b\cdot \nabla u\|_{H^{-1}}^2.
 \]
Using \eqref{eq:T-NormSq}, for the one dimensional case we get 
\begin{equation}\label{eq:COptNorm}
\|u\|_{*}^2 = \varepsilon^2|u|^2 + \|u\|^2 -\overline{u}^2.
\end{equation}

%%%%%%%%%%
%SUBSECTION
%%%%%%%%%%%%%

\subsection{Discrete optimal  trial norm} \label{sec:DON}
We assume  that $V_h\subset V=H_0^1(\Omega)$ and $\M_h\subset Q=H_0^1(\Omega)$  are discrete finite element  spaces and that $\M_h\subset V_h$. 
For the purpose of obtaining a discrete optimal norm on $\M_h$, we  let $P_h:Q\to V_h$ be the standard elliptic projection defined by 
\[
a_0(P_h\, u, v_h) = a_0(u,v_h), \ \text{for all} \, v_h \in V_h.
\]
 The optimal trial norm   on $ \M_h$ is 

 \begin{equation}\label{eq:dotn}
  \|u_h\|_{*,h}:= \du {\sup} {v_h \in V_h}{} \ \frac {b(v_h, u_h)}{|v_h|}.
 \end{equation}
Similarly to the continuous case, 
\[
 \|u_h\|_{*,h}:=  \du {\sup} {v_h \in V_h}{} \ \frac {\varepsilon a_0(u_h, v_h) + a_0(Tu_h,v_h) }{|v_h|}  = \du {\sup} {v_h \in V_h}{} \ \frac {\varepsilon a_0(u_h, v_h) + a_0(P_h\,Tu_h,v) }{|v_h|}, 
\]
From the definition of $P_h$ and the anti-symmetry of $T$, we have
\[
\ a_0(P_h\, Tu_h, u_h) = a_0(T u_h,u_h)=0.
\]
Thus, by using the Riesz representation theorem on $V_h$, we get 
\begin{equation}\label{eq:COptNorm-h}
\|u_h\|_{*,h}^2  =\varepsilon^2|u_h|^2 +|P_hTu_h|^2 := \varepsilon^2|u_h|^2 +|u_h|^2_{*,h}.
\end{equation}
Note that for the given  trial  spaces  $\M_h$  and $Q$, the above norm is well defined for any $u\in Q$. Hence, the continuous and discrete optimal trial norms can be compared on $Q$.

The advantage of using  the  optimal trial norm on $Q$ and $\M_h$   resides with the fact that both $inf-sup$ and $sup-sup$  are equal to one at both the continuous and the discrete levels. As a direct consequence of Theorem \ref{th:ap-PG}, we obtain:
 \begin{theorem}\label{th:ap-PG}
Let  $\|\cdot\|_{*}$ and $\|\cdot\|_{*,h}$ be the norms on $Q$, and $\M_h$ and  assume that \eqref{eq:c0} holds. Let $u$ be the solution of \eqref{VF1d} and let  $u_h$ be  the unique solution of problem \eqref{discrete_var_PG}. Then the following error estimate holds:
\begin{equation}\label{eq:Approx-Opt}
\|u-u_h\|_{*,h}\leq c_0\, \du{\inf}{p_h \in \M_h}{} \ \|u-p_h\|_{*,h}.
\end{equation}
\end{theorem}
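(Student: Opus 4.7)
The plan is to observe that this theorem is an immediate specialization of the earlier abstract result (Theorem \ref{th:ap-PG} as stated in Section \ref{sec:SPLS-discretization}) to the particular choice of optimal norms, once we verify that these optimal norms make the continuity constant $M$ and the discrete inf--sup constant $m_h$ both equal to $1$. The hypothesis \eqref{eq:c0} is given, so the only work is checking the two constants.

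First I would note that, by the very definition of the optimal continuous trial norm,
\begin{equation*}
\|u\|_{*} = \sup_{v\in V}\frac{b(v,u)}{|v|}\qquad\text{for all } u\in Q,
\end{equation*}
so $b(v,u)\le |v|\,\|u\|_*$ with equality in the supremum, which gives $M=1$ in the $\sup$--$\sup$ condition \eqref{sup-sup_a} when $Q$ is equipped with $\|\cdot\|_*$. Second, by the analogous definition \eqref{eq:dotn} of $\|\cdot\|_{*,h}$ on $\M_h$,
\begin{equation*}
\sup_{v_h\in V_h}\frac{b(v_h,u_h)}{|v_h|\,\|u_h\|_{*,h}}=1\qquad\text{for every } u_h\in\M_h,
\end{equation*}
so taking the infimum over $u_h\in\M_h$ yields $m_h=1$ in the discrete inf--sup condition \eqref{inf-sup_h}. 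The formulas \eqref{eq:COptNormd} and \eqref{eq:COptNorm-h}, derived from the Riesz representation theorem together with the antisymmetry identities $a_0(Tu,u)=0$ and $a_0(P_hTu_h,u_h)=0$, confirm that these quantities really are norms, so the two suprema are attained in the sense needed.

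With $M=1$ and $m_h=1$ established, the earlier Theorem \ref{th:ap-PG} (stated in Section \ref{sec:SPLS-discretization} with general constants $M$ and $m_h$) applies directly: under hypothesis \eqref{eq:c0} and the unique solvability of \eqref{discrete_var_PG} guaranteed by the nontrivial discrete inf--sup, the Petrov--Galerkin error satisfies
\begin{equation*}
\|u-u_h\|_{*,h}\le c_0\,\frac{M}{m_h}\,\inf_{p_h\in\M_h}\|u-p_h\|_{*,h}=c_0\,\inf_{p_h\in\M_h}\|u-p_h\|_{*,h},
\end{equation*}
which is exactly the claimed estimate \eqref{eq:Approx-Opt}. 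There is no real obstacle here; the only subtlety worth flagging is that $\|\cdot\|_{*,h}$, as defined through the projection $P_h$, makes sense for every $u\in Q$ (not only for elements of $\M_h$), which is what legitimizes comparing $\|u-u_h\|_{*,h}$ and $\|u-p_h\|_{*,h}$ in the final inequality and makes the hypothesis \eqref{eq:c0} meaningful.
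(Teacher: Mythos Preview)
Your proposal is correct and matches the paper's own argument: the paper simply observes that with the optimal trial norms both the continuity constant $M$ and the discrete inf--sup constant $m_h$ equal $1$, and then invokes the earlier abstract Theorem~\ref{th:ap-PG} from Section~\ref{sec:SPLS-discretization} to conclude. Your write-up is more detailed than the paper's one-line justification, but the route is identical.
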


 \subsection{Discrete optimal  trial norm for the one dimensional case} \label{sec:optimalnorm-CDc}
%%%%%%%%%%%%%%%
We review some formulas and results from  \cite{CRD-results, connections4CD}. 

For  $V=Q=H^1_0(0,1)$ we consider the  standard inner product  given by  $a_0(u,v) = (u,v)_V = (u',v')$.
We  divide the interval $[0,1]$ into $n$  equal length subintervals using the nodes $0=x_0<x_1<\cdots < x_n=1$ and denote  $h:=x_j - x_{j-1}, j=1, 2, \cdots, n$. We define  the corresponding finite element discrete space  $\M_h$  as  the spaces of all {\it continuous piecewise linear  functions} with respect to the given nodes, that are zero at $x=0$ and $x=1$. Next, we let  $\M_h=V_h $  be the standard space of continuous piecewise linear functions. 
%\[\M_h=V_h=span \{\varphi_1, \cdots,\varphi_{n-1}\}. \]

For the purpose of error analysis, on $V_h$ we consider the standard norm induced by $a_0(\cdot,\cdot)$, but on $\M_h$ we choose an optimal norm  from the stability point of view. 
On $V_h\times \M_h$, we consider the  bilinear from
 \begin{equation}\label{dVFad}
b_d(v_h, u_h)=  d\, a_0(u_h, v_h)+(u'_h,v_h)\  \text{for all} \ u_h\in \M_h, v_h \in V_h,
\end{equation}
where $d=d_{\varepsilon,h}$ is a constant, that might depend on $h$ and $\varepsilon$. 
 The same arguments used in  Section \ref{sec:DON} to deduce the formula \eqref{eq:COptNorm-h}, can be used  here with $\varepsilon=d$ to obtain 
 \begin{equation}\label{eq:COptNorm-h-d}
 \|u_h\|^2_{*,h}=\du {\sup} {v_h \in \M_h}{} \ \frac {(b_d(w_h, u_h))^2}{|w_h|^2}  =d^2|u_h|^2 +|P_hTu_h|^2.
\end{equation}
Denoting $|u|_{*,h}:= |P_hTu|$, see \cite{CRD-results}, we obtain the explicit formula
 \begin{equation}\label{eq:PhTu-Norm} 
|u|^2_{*,h}:= |P_hTu|^2 = \frac{1}{n}\sum_{i=1}^n \left(\frac{1}{h}\, \int_{x_{i-1}}^{x_i} u(x)\,dx\right)^2 - \left(\int_0^1u(x)\,dx\right)^2.
\end{equation}

Using a Poincare inequality,  see \cite{CRD-results} for details, we have
 \begin{equation}\label{eq:CvsD}
\|u\|_{*}^2 -\left (\varepsilon^2 + \frac{h^2}{\pi^2}\right )  |u|^2= \|u-\overline{u}\|^2  -   \frac{h^2}{\pi^2}  \,  |u|^2\leq 
|u|^2_{*,h} \leq \|u\|^2.  
\end{equation}

 %%%%%%%%%%%%%%%%%%%
%SECTION
%%%%%%%%%%%%%%%%%%%
\section{Standard and and SPLS finite element  variational formulation and discretization}\label{sec:Lin+SPLS}
In this section  we review results for the standard and the SPLS  finite element discretization of \eqref{eq:1d-model}.  In addition, we justify  the oscillatory behavior of the $P^1-P^2$ SPLS discretization. We will use the following  notation:
\[ 
\begin{aligned}
a_0(u, v) & = \int_0^1 u'(x) v'(x) \, dx, \ (f, v) = \int_0^1  f(x) v(x) \, dx,\ \text{and}\\
b(v, u)& =\varepsilon\, a_0(u, v)+(u',v)  \ \text{for all} \ u,v \in V:=H^{1}_0(0,1).
\end{aligned}
 \]
A variational formulation of \eqref{eq:1d-model}, with $b=1$, is as follows: \\ Find $u \in V:= H_0^1(0,1)$ such that
 \begin{equation}\label{VF1d}
b(v,u) = (f, v), \ \text{for all} \ v \in V=H^{1}_0(0,1).
\end{equation}

\subsection{Standard  discretization with $C^0-P^1$ test and trial spaces}\label{sec:1d-lin-discrete}

We  divide the interval $[0,1]$ into $n$  equal length subintervals using the nodes $0=x_0<x_1<\cdots < x_n=1$ and denote  $h:=x_j - x_{j-1}, j=1, 2, \cdots, n$. For the above uniform distributed notes on $[0, 1]$, we define  the corresponding finite element discrete space  $\M_h$  as  the subspace of $H^1_0(0,1)$, given by
 \[ 
 \M_h = \{ v_h \in V \mid v_h \text{ is linear on each } [x_j, x_{j + 1}]\},
 \]
  i.e., $\M_h$ is the space of all {\it continuous piecewise linear  functions} with respect to the given nodes, that are zero at $x=0$ and $x=1$.  We consider the nodal basis $\{ \varphi_j\}_{j = 1}^{n-1}$ with the standard defining property $\varphi_i(x_j ) = \delta_{ij}$.  
We couple the above discrete trial space with the  discrete  test space $V_h:=\M_h$.  
 Thus, the standard $C^0-P^1$  variational formulation of \eqref{VF1d} is: \\ Find $u_h \in \M_h$ such that
 \begin{equation}\label{dVF}
b(v_h, u_h) =\varepsilon (u_h', v_h') + (u_h', w_h) = (f, v_h), \ \text{for all} \ v_h \in V_h.
\end{equation}
%The following inequality was proved in  \cite{CRD-results}.%, connections4CD

 From \eqref{eq:CvsD} it is easy to obtain the following estimate
 \begin{equation}\label{eq:eqNorms2}
\|u\|^2_{*}\leq \left (1+ \left (\frac{h}{\pi \, \varepsilon} \right )^2\right )  \|u\|^2_{*,h}\ \text{for all} \  u\in Q.
\end{equation}

As a consequence of  Theorem \ref{th:ap-PG} and \eqref{eq:eqNorms2}, we have the following result.
%as presented in   \cite{CRD-results}, e 
\begin{theorem}\label{th:opt-lin}
If $u$ is the solution of \eqref{VF1d}, and $u_h$ the solution of the  linear discretization \eqref{dVF}, then 
\[
\|u-u_h\|_{*,h}  \leq c(h,\varepsilon) \du{\inf}{v_h \in V_h}{} \ \|u-v_h\|_{*,h}, \ \text{where} 
\]
\[
c(h,\varepsilon)= \sqrt {1+ \left (\frac{h}{\pi \, \varepsilon}\right )^2 } \approx \frac{h}{\pi \, \varepsilon} \ \text{ if }\ \varepsilon <<h.
\]
\end{theorem}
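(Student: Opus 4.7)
The plan is to apply Theorem \ref{th:ap-PG} directly, with the key input being the norm equivalence \eqref{eq:eqNorms2} that precedes the statement. I would first observe that the standard $C^0$–$P^1$ discretization \eqref{dVF} is a Petrov–Galerkin method in the sense of Section \ref{sec:SPLS-discretization} with $V_h = \M_h$, and that the form $b(\cdot,\cdot)$ on $V_h \times \M_h$ is exactly the form $b_d$ of \eqref{dVFad} for the choice $d = \varepsilon$. Consequently, the discrete optimal trial norm \eqref{eq:dotn} on $\M_h$ coincides with $\|\cdot\|_{*,h}$ given by \eqref{eq:COptNorm-h-d}, and by the same Riesz-representation argument used in Section \ref{ssec:ON} both the continuous sup-sup constant $M$ and the discrete inf-sup constant $m_h$ are equal to $1$.

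Next I would check the hypothesis \eqref{eq:c0} of Theorem \ref{th:ap-PG}. This is precisely the content of \eqref{eq:eqNorms2}, which itself follows from the Poincaré-based comparison \eqref{eq:CvsD} recorded in Section \ref{sec:optimalnorm-CDc}. Rearranging \eqref{eq:CvsD} yields
\[
\|u\|_*^2 \;\le\; |u|^2_{*,h} + \left(\varepsilon^2 + \frac{h^2}{\pi^2}\right)|u|^2 \;-\; \|u-\overline{u}\|^2 \;+\; \frac{h^2}{\pi^2}|u|^2,
\]
and a direct manipulation (using $\varepsilon^2|u|^2 \le \|u\|_{*,h}^2$ and $|u|^2 \le \|u\|^2_{*,h}/\varepsilon^2$) gives the factor $1 + (h/(\pi\varepsilon))^2$ appearing in \eqref{eq:eqNorms2}. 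Thus \eqref{eq:c0} holds with
\[
c_0 \;=\; \sqrt{\,1 + \left(\tfrac{h}{\pi\varepsilon}\right)^2\,} \;=\; c(h,\varepsilon).
\]

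With $M = m_h = 1$ and this value of $c_0$, Theorem \ref{th:ap-PG} immediately produces the estimate
\[
\|u-u_h\|_{*,h} \;\le\; c(h,\varepsilon)\;\du{\inf}{v_h\in V_h}{}\;\|u-v_h\|_{*,h},
\]
which is the claim; the asymptotic behavior $c(h,\varepsilon) \approx h/(\pi\varepsilon)$ in the convection-dominated regime $\varepsilon \ll h$ is then read off from the closed form. The only step with any substance is verifying the norm-equivalence constant, but since \eqref{eq:CvsD} and \eqref{eq:eqNorms2} have already been established in Section \ref{sec:optimalnorm-CDc}, the proof reduces essentially to citing Theorem \ref{th:ap-PG} with the bookkeeping above; I do not anticipate any real obstacle.
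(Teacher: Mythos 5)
Your proposal is correct and follows essentially the same route as the paper, which proves this theorem in one line by combining Theorem \ref{th:ap-PG} (in its optimal-norm form, where $M=m_h=1$) with the already-established comparison \eqref{eq:eqNorms2}; your identification of the form as $b_d$ with $d=\varepsilon$ and your verification of \eqref{eq:c0} with $c_0=c(h,\varepsilon)$ match the intended argument. One small caveat: your intermediate displayed "rearrangement" of \eqref{eq:CvsD} is not quite what that chain of inequalities gives (the clean route is $\|u\|_*^2\le |u|_{*,h}^2+(\varepsilon^2+h^2/\pi^2)|u|^2=\|u\|_{*,h}^2+\tfrac{h^2}{\pi^2}|u|^2\le(1+\tfrac{h^2}{\pi^2\varepsilon^2})\|u\|_{*,h}^2$), but the parenthetical manipulation you describe is the right one and the conclusion stands.
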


In the next sections, we will show that the optimal discrete norm and  $c(h,\varepsilon)$ improve as we consider different test spaces. Numerical  tests for the  case $\int_0^1  f(x)  \, dx\neq 0$, show that as $\varepsilon <<h$, the linear finite element solution of \eqref{dVF}  presents non-physical oscillations, see \cite{CRD-results}.
The behavior of the standard linear finite element approximation  of \eqref{dVF}  motivates  the use of non-standard discretization approaches, such as the  {\it saddle point   least square} or {\it Petrov-Galerkin} methods.%
 
 %%%%%%%%%%%
 %SUBSECTION
 %%%%%%%%%
 %%%%%%%%%%%%%%%
%SUB-SECTION
\subsection{SPLS discretization} \label{sec:SPLS}
%%%%%%%%%%%%%%%
A  {\it  saddle point least square}  (SPLS) approach   for solving \eqref{VF1d} has  been used before, for example in \cite{BM12,Dahmen-Welper-Cohen12, CRD-results}. \\  For $V=Q= H^1_0(0,1)$, we look for  finding $(w, u) \in V \times Q$ such that 
\begin{equation}\label{SPLS4model2}
\begin{array}{lclll}
a_0(w,v) & + & b( v, u) &= (f,v ) &\ \Forall  v \in V,\\
b(w,q) & & & =0   &\  \Forall  q \in Q,   
\end{array}
\end{equation}
where
\[
b(v, u) =\varepsilon\, a_0(u, v)+(u',v)= \varepsilon\, (u', v')+(u',v). 
\]

%For the analysis purposes, we can possibly  have different norms on $V$ and $Q$. 
%discrete:variationalSPP
%(or \eqref{PG4model}) 
Numerical tests for the  discretization of \eqref{SPLS4model2} with various degree polynomial  test and trail spaces were done in  \cite{Dahmen-Welper-Cohen12, dem-fuh-heu-tia19}. Following \cite{CRD-results}, we review the main  error analysis results for  $\M_h= C^0-P^1:= span\{ \varphi_j\}_{j = 1}^{n-1}$, with   the standard linear nodal functions $\varphi_j$,  and $V_h=C^0-P^2$  on given uniformly distributed nodes on $[0, 1]$. To define  a basis for $V_h$, we consider a bubble function for each interval $[x_{i-1}, x_i], i=1,2, \cdots, n$, defined by
 \[
 B_i:= 4\, \varphi_{i-1}\, \varphi_i, \ \ i=1,2, \cdots, n.
 \] 
Then, we have 
\[
V_h:= span \{ \varphi_j \}_{j = 1}^{n-1} + span \{B_{j}\}_{j = 1}^n. 
\]
The SPLS discretization of \eqref{SPLS4model2} is: Find $(w_h, u_h) \in V_h \times \M_h$ such that 
\begin{equation}\label{SPLS4model-h}
\begin{array}{lclll}
a_0(w_h,v_h) & + & b( v_h, u_h) &= (f,v_h ) &\ \Forall  v_h \in V_h,\\
b(w_h,q_h) & & & =0   &\  \Forall  q_h \in \M_h.   
\end{array}
\end{equation}

 In this case, note that the projection $P_h$  defined in Section \ref{sec:DON}, is the projection on the space $V_h=C^0-P^2$. For any piecewise linear function $u_h \in \M_h$, we have that 
\[
Tu_h = x\overline{u}_h - \int_0^xu_h(s)\,ds
\]
is a continuous piecewise quadratic function. Consequently, $Tu_h \in V_h$, and $P_h\, Tu_h =T u_h$. The optimal discrete norm on $\M_h$ becomes 
\[
\|u_h\|_{*,h}^2 =\varepsilon^2 |u_h|^2 + |Tu_h|^2 =\|u_h\|_{*}^2.
\]
Using the  optimal norm on $\M_h$,  a discrete $\inf-\sup$ condition satisfied, and the problem \eqref{SPLS4model-h} has  a unique solution. 
In addition, for this $P^1-P^2$ SPLS discretization, we can consider the same norm given by 
\[
\|u\|_{*}^2 =\varepsilon^2|u|^2 +\|u-\overline{u}\|^2=\varepsilon^2|u|^2 +\|u\|^2-\overline{u}^2=\|u\|_{*,h}^2 
\]
 on both spaces $Q$ and $\M_h$. 
As a consequence of the approximation Theorem \ref{th:sharpEE}, we get the following  optimal error estimate. 

\begin{theorem}\label{th:P1P2err}
If $u$ is the solution of \eqref{VF1d}, and $u_h$ is the  SPLS solution  for the $(P^1-P^2)$ discretization, then  
$$
\|u - u_h\|_{*}\leq \inf_{p_h\in \M_h}\|u - p_h\|_{*} \leq \|u - u_I\|_{*},
$$
where $u_I$ is the interpolant of the exact solution on the uniformly distributed nodes on $[0, 1]$. 
\end{theorem}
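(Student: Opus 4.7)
The plan is to leverage the special algebraic feature of the $P^1$-$P^2$ pairing already observed in the paragraph preceding the statement: the operator $T$ sends the space $\M_h$ of continuous piecewise linear functions into $V_h = C^0$-$P^2$. Indeed, for $u_h \in \M_h$, the explicit formula $T u_h(x) = x\,\overline{u_h} - \int_0^x u_h(s)\,ds$ is globally continuous and piecewise quadratic, with $T u_h(0) = 0$ and $T u_h(1) = \overline{u_h} - \overline{u_h} = 0$, hence $T u_h \in V_h$. Consequently $P_h T u_h = T u_h$, and the discrete optimal norm collapses to the continuous one on $\M_h$:
\[
\|u_h\|_{*,h}^2 = \varepsilon^2 |u_h|^2 + |P_h T u_h|^2 = \varepsilon^2 |u_h|^2 + |T u_h|^2 = \|u_h\|_*^2.
\]

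With this coincidence of norms, I would invoke Theorem \ref{th:sharpEE} using the single norm $\|\cdot\|_*$ on both $Q$ and $\M_h$. The sup-sup constant is $M = 1$ directly from the defining relation $\|u\|_* = \sup_{v \in V} b(v,u)/|v|$, and the discrete inf-sup constant is $m_h = 1$ because $\sup_{v_h \in V_h} b(v_h,u_h)/|v_h| = \|u_h\|_{*,h}$ by definition and the latter equals $\|u_h\|_*$ on $\M_h$ by the preceding paragraph. With $M/m_h = 1$, Theorem \ref{th:sharpEE} yields
\[
\|u - u_h\|_* \leq \inf_{p_h \in \M_h} \|u - p_h\|_*.
\]
The second inequality of the statement is then immediate from the admissible choice $p_h = u_I \in \M_h$.

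I do not foresee a serious obstacle, since the key fact $T u_h \in V_h$ is elementary and already flagged by the authors. The one subtlety to handle carefully is that the equality $\|\cdot\|_{*,h} = \|\cdot\|_*$ holds only on $\M_h$ and not on all of $Q$; however, Theorem \ref{th:sharpEE} only requires the continuous sup-sup and inf-sup on $V \times Q$ and a discrete inf-sup taking place inside $V_h \times \M_h$, so using the continuous norm $\|\cdot\|_*$ throughout is justified. In particular no $\varepsilon$- or $h$-dependent constant enters, which contrasts with the constant $c(h,\varepsilon) \approx h/(\pi\varepsilon)$ of Theorem \ref{th:opt-lin} for the standard linear scheme and explains the quasi-optimality of the $P^1$-$P^2$ SPLS discretization.
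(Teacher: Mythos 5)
Your proposal is correct and follows essentially the same route as the paper: the authors likewise observe that $Tu_h$ is continuous piecewise quadratic so $P_hTu_h=Tu_h$ and $\|\cdot\|_{*,h}=\|\cdot\|_*$ on $\M_h$, and then obtain the estimate directly from Theorem \ref{th:sharpEE} with $M=m_h=1$ for the optimal trial norms. The final inequality by choosing $p_h=u_I$ is also how the paper concludes.
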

%%%%%%%%%%%%%%%
%SUB-SECTION
\subsection{The oscillatory behavior of the  $P^1-P^2$ SPLS discretization} \label{sec:behaveSPLS}  
%%%%%%%%%%%%%%%
For  $\int_0^1f(x)\, dx=0$, the $P^1-P^2$ SPLS discretization improves on the standard linear discretization of \eqref{VF1d} from both the error point of view, and from the presence of the non-physical oscillations point of view. A more detailed  numerical analysis and comparison was done in \cite{CRD-results}.  However, as noted in \cite{CRD-results}, for  $\int_0^1  f(x) \, dx\neq 0$, the SPLS solution $u_h$ approximates the  shift by a constant of  the solution $u$ of \eqref{VF1d} and in addition, {\it non-physical oscillations} still appear in the plot of $u_h$ at the ends of the interval. An explanation of this phenomena can be done using the simplified variational problems. More precisely, we  can consider  the {\it continuous simplified} problem obtained from \eqref{VF1d}, by letting $\varepsilon \to 0$, i.e.,  Find $ u \in Q=H^1_0(0,1)$ such that 
\begin{equation}\label{Smodel-C}
 (u', v) = (f,v ) \ \Forall  v \in V= H^1_0(0,1).
\end{equation}
The problem  is not well posed when $\int_0^1  f(x)  \, dx\neq 0$. In order to have  the existence and the uniqueness of the solution of \eqref{Smodel-C}, we can change the trial space $Q$ to $L^2_0(0, 1):=\{u \in L^2(0, 1) | \ \int_0^1 u=0\}$.  Nevertheless,  in this case, the solution space cannot see the boundary conditions of the original problem \eqref{eq:1d-model}.  
On the other hand, the {\it discrete simplified} linear system obtained from \eqref{SPLS4model-h}  by letting $\varepsilon \to 0$, i.e.: Find $(w_h, u_h) \in V_h \times \M_h$ such that 
\begin{equation}\label{SPLS4model-h-R}
\begin{array}{lclll}
(w'_h,v'_h) & + & (u'_h, v_h) &= (f,v_h ) &\ \Forall  v_h \in V_h,\\
(w'_h,q_h) & & & =0   &\  \Forall  q_h \in \M_h,   
\end{array}
\end{equation}
 has unique solution because a discrete $\inf-\sup$ condition holds when using the optimal trial norm on $\M_h$.  Numerical tests in \cite{CRD-results} showed that oscillation in {the discrete simplified solution}  $u_h$ of \eqref{SPLS4model-h-R} predict oscillatory behavior of  $u_h$-the SPLS discrete solution of \eqref{SPLS4model-h}. In fact for $\varepsilon <<h$, in the ``eye ball measure", the two solutions are identical.  Next, we will justify why the component $u_h$ of \eqref{SPLS4model-h-R} oscillates in the case  $\int_0^1  f(x) \, dx\neq 0$. 
 
 Let $u$  be the  solution of \eqref{Smodel-C} with $Q=L^2_0(0, 1)$ and $V= H^1_0(0,1)$) and let $u_h$  be  the the second component of  the solution of \eqref{SPLS4model-h-R}.  It is easy to check   $u(x)=w(x)-\overline{w}$, where $w(x) =\int_0^x f(s)\, ds$. By eliminating $w_h$ from  the system \eqref{SPLS4model-h-R}, it follows that  $u_h- \overline{u}_h$ is the $L^2$ projection of  $u$  onto $\overline{\M}_h:= \{w_h -\overline{w}_h \ | w_h \in \M_h\}$. We note that $\overline{\M}_h$ is a space of continuous piecewise linear functions that have the same values at the end points of $[0, 1]$, while  $u$ cannot have the same  values at the end points if  $\int_0^1  f(x) \, dx\neq 0$. This explains the  {\it non-physical oscillations}  of the SPLS discretization of \eqref{SPLS4model-h}.

 %In the Section \ref{sec:appendix} we justify why, in spite uniform  approximation result of Theorem \ref{th:P1P2err}, the $P^1-P^2$ SPLS discretization  exhibits oscillatory  behavior. 
 In the next section, we present a particular SPLS discretization that is free of {\it non-physical oscillations}.

 %%%%%%%%%%%%%%%
%SECTION
 %%%%%%%%%%%%%%%
 
\section{The Petrov-Galerkin method  with bubble type test space } \label{sec:PG}
%%%%%%%%%%%%%%%
For improving the stability and approximability of the standard linear finite element approximation for solving \eqref{VF1d}, various  Petrov-Galerkin discretizations  were considered, see e.g., \cite{CRD-results, zienkiewicz76, mitchell-griffiths, roos-stynes-tobiska-96, zienkiewicz2014}. In this section, we analyze a general class of Upwinding Petrov-Galerkin   (UPG) discretizations based on a bubble modification of the  standard $C^0-P^1$ test space.  
The idea is to define  $V_h$  by adding to each $\varphi_j$, a pair of polynomial bubble functions.
According to Section 2.2.2 in \cite{zienkiewicz2014}, this idea  was first suggested in \cite{ZGHupwind76} and used in the same year in \cite{zienkiewicz76} with  quadratic bubble modification.  
%The choice of  $V_h$ defined by adding to each $\varphi_j$, a pair of polynomial bubble functions of  
The method  is known in literature as {\it upwinding PG method} or {\it upwinding finite element method}, see \cite{roos-stynes-tobiska-96, zienkiewicz2014}. Next, we build on the description of UPG introduced in  \cite{connections4CD}   emphasizing on a new error analysis of the method.

%The  PG method that can be found in  \cite{CRD-results, mitchell-griffiths},  and uses quadratic bubble upwinding. 
%Besides a more general approach of the method, we discover an equivalent variational  reformulation of the proposed PG  discretization that uses  a new bilinear form defined on {\it standard  linear finite element spaces}. The   new formulation  leads to strong connections with the upwinding FD methods and to a better understanding of both the FD and  the FE  methods. 

The standard  variational formulation for solving  \eqref{eq:1d-model} with $b=1$, is: Find $u \in Q=H^{1}_0(0,1)$ such that 
\begin{equation}\label{PG4model}
b(v, u) =\varepsilon\, a_0(u, v)+(u',v) = (f,v ) \ \Forall  v \in V=H^{1}_0(0,1).\\
\end{equation}
%Only for the analysis purpose, we involve  an optimal norm on $Q$.  
A general  Petrov-Galerkin method for solving \eqref{PG4model} chooses  a test space $V_h \subset V=H^{1}_0(0,1)$ that is different from the trial space $\M_h \subset Q=H^1_0(0,1)$.

%SUB-SECTION
%%%%%%%%%%%%%%%
%new subsection. Typed by Cristina
%\subsubsection{Bubble Functions}
%\subsection{General Bubble Upwinding Petrov-Galerkin Method}\label{sec:GenBubble} 

For describing the general UPG discretization we  consider a continuous  (bubble)  function $B:[0,h] \to\R$ with the following properties:
\begin{equation}\label{Bbounds}
B(0)=B(h)=0,\\
\end{equation}

\begin{equation}\label{b1}
\int_0^h B(x)  \, dx=b_1h, \ \text{with} \ b_1>0.
\end{equation}

\begin{equation}\label{b2}
\int_0^h (B'(x))^2  \, dx=\frac{b_2}{h}, \ \text{with} \ b_2>0.
\end{equation}
 By translating $B$, we generate $n$ bubble functions that are locally supported. For $ i=1,2, \cdots, n$,  we define $B_i:[0, 1] \to \R$ by $B_i(x)=B(x-x_{i-1})=B(x-(i-1)h)$ on $[x_{i-1}, x_i]$, and we extend it  by zero to the entire interval $[0, 1]$. Note that $B_1=B$ on $[0, h]$, and for $ i=1,2, \cdots, n$, we have
\begin{equation}\label{B_i_bounds}
B_i(x_{i-1})=B_i(x_i)=0, \  \text{and} \ B_i=0\ \text{on} \ [0, 1]\backslash (x_{i-1}, x_i).
\end{equation}

\begin{equation}\label{B_i_b1}
\int_{x_{i-1}}^{x_i} B_i(x)  \, dx=b_1h, \ \text{with} \ b_1>0,\\
\end{equation}
and
\begin{equation}\label{B_i_b2}
\int_{x_{i-1}}^{x_i} (B_i ' (x) )^2 \, dx=\frac{b_2}{h}.\\
\end{equation}
Next, we consider a particular class of Petrov-Galerkin discretizations of the model problem \eqref{PG4model} with trial space $\M_h= span\{ \varphi_j\}_{j = 1}^{n-1}$ and the test space  $V_h$ obtained by modifying $M_h$  using the bubble functions $B_i$. %such that diffusion is created from the convection term. 
%We could not determined  who first introduced a PG discretization based on this idea, but  the earliest  work  we could  find mentioning  {\it upwinding finite element}, is  the book by Mitchell and Griffiths, \cite{mitchell-griffiths}, where a basis for $V_h$ is defined by adding to each $\varphi_j$, a pair of polynomial bubble functions. 
%We will recover in particular Mitchell and Griffiths' approach in Section  \ref{sec:Quad-Bubbles}. 
%The FE {\it bubble upwinding}  idea works for any type of bubble functions that are generated by translating a function $B$ satisfying \eqref{Bbounds} and \eqref{b1}. 
%special pair of local  bubble functions.  
% This is also known as an {\it upwinding finite element scheme}, see Section 2.2 in \cite{roos-stynes-tobiska-96}. We define the test space $V_h$ by introducing a  bubble function for each interval $[x_{i-1}, x_i], i=1,2, \cdots, n$:
% \[ B_i:= 4\, \varphi_{i-1}\, \varphi_i, \ \ i=1,2, \cdots, n, \] which is supported in $[x_{i-1}, x_i]$. The discrete test space $V_h$ is 
We define the test space $ V_h$ by 
 \[
 V_h:= span \{ \varphi_j  + (B_{j}-B_{j+1})\}_{j = 1}^{n-1}, 
 \]
 where $\{ B_i\}_{i=1,\cdots,n}$ satisfy  \eqref{B_i_bounds}-\eqref{B_i_b2}. 
We note that both $\M_h$ and $V_h$ have the same dimension of $(n-1)$. 

%In a more general approach the test functions can be defined using  upwinding parameters $\sigma_i >0$ to get $V_h:= span \{ \varphi_j  + \sigma_i( B_{j}-B_{j+1})\} _{j = 1}^{n-1}$. 

%\subsubsection{Variational formulation  and matrices}
The upwinding Petrov Galerkin discretization with general bubble functions for 
\eqref{eq:1d-model}  with $b=1$ is: Find $u_h \in \M_h$ such that 
\begin{equation}\label{eq:1d-modelPG}
b(v_h, u_h) = \varepsilon\, a_0(u_h, v_h)+(u'_h,v_h) =(f,v_h) \ \Forall  v_h \in V_h. 
\end{equation}
As presented in \cite{connections4CD}, we show that the variational formulation \eqref{eq:1d-modelPG} admits a reformulation that uses  a new bilinear form defined on {\it standard  linear finite element spaces}. We let 
\[
u_h= \sum_{j=1}^{n-1} \alpha_j \varphi_j,
\]
and consider a generic test function $v_h$ defined by 
\[
v_h= \sum_{i=1}^{n-1} \beta_i \varphi_i + \sum_{i=1}^{n-1}  \beta_i (B_i - B_{i+1}) = \sum_{i=1}^{n-1} \beta_i \varphi_i + \sum_{i=1}^{n}  (\beta_i - \beta_{i-1}) B_{i},
\]
where, we define $\beta_0=\beta_n=0$. Next we will use the splitting of $v_h$ in a linear part plus a bubble part:
\[
v_h=w_h + B_h, \ \text{with} \ w_h:=  \sum_{i=1}^{n-1} \beta_i \varphi_i  \ \text{and } \  B_h:=\sum_{i=1}^{n}  (\beta_i - \beta_{i-1}) B_{i}.
\]
Based on  formulas \eqref{B_i_bounds}, \eqref{B_i_b1} and \eqref{B_i_b2}, the fact that $u'_h, w'_h $  are constant  on each of  the intervals $[x_{i-1}, x_i]$,  and that $w'_h= \frac{\beta_i -\beta_{i-1} }{h}$ on $[x_{i-1}, x_i]$,  we obtain
\[
(u'_h, B_h) = \sum_{i=1}^{n}\int_{x_{i-1}}^{x_i}  u'_h (\beta_i - \beta_{i-1}) B_{i}=
 \sum_{i=1}^{n} u'_h \,  w'_h \int_{x_{i-1}}^{x_i}  B_{i} =b_1h \sum_{i=1}^{n} \int_{x_{i-1}}^{x_i}  u'_h  w'_h. 
\]
Thus
\begin{equation} \label{eq:upBh}
(u'_h, B_h) =b_1h (u'_h, w'_h), \ \text{where} \  v_h=w_h + B_h.
\end{equation}

In addition, since $u'_h$ is constant on $[x_{i-1},x_i]$, we have
\[
(u'_h, B'_i) =u'_h\int_{x_{i-1}}^{x_i} B'_i(x)  \, dx=\ 0 \ \text{for all} \ i=1, 2, \cdots,  n, \text{hence} 
\]
\begin{equation} \label{eq:upBph}
 (u'_h, B'_h) =0, \  \text{for all} \  u_h \in \M_h, v_h=w_h + B_h \in V_h.
\end{equation}
From  \eqref{eq:upBh} and  \eqref{eq:upBph}, for any $u_h \in \M_h, v_h=w_h + B_h \in V_h$ we get
\begin{equation} \label{eq:bPG}
 b(v_h, u_h) = \left (\varepsilon + b_1h\right )  (u'_h, w'_h) +  (u'_h, w_h).
\end{equation}
Introducing the notation $d=d_{\varepsilon,h} = \varepsilon +h\, b_1$ and using the notation of Section \ref{sec:optimalnorm-CDc},  we have 
\begin{equation} \label{eq:bPGd}
 b(v_h, u_h) = b_d(u_h,w_h), \ \text{where} \ v_h=w_h + B_h, \ \text{and} \ u_h, w_h \in \M_h.
\end{equation}
%Thus, the  left hand side of \eqref{eq:1d-modelPGR}  is given by the bilinear form  $b_d(u_h,w_h)$ defined only for continuous piecewise linear functions.  

Using \eqref{eq:upBph} and \eqref{B_i_b2},  we  note that for any $v_h=w_h + B_h \in V_h$  the energy 
norm of $v_h$ is a multiple of the energy of the linear part $w_h$. Indeed, 
 \[
 \begin{aligned}
 (v'_h, v'_h) & =(w'_h + B'_h, w'_h + B'_h) = (w'_h, w'_h) + ( B'_h, B'_h) =\\ 
 & =(w'_h, w'_h) +  \sum_{i=1}^{n} (\beta_i-\beta_{i-1})^2 (B'_i, B'_i)=\\
 & =(w'_h, w'_h)   + b_2h \sum_{i=1}^{n} \left (\frac{\beta_i-\beta_{i-1}}{h} \right )^2 = \\
 & =(w'_h, w'_h)   +  b_2 \sum_{i=1}^{n} \left (\int_{x_{i-1}}^{x_i} (w'_h)^2 \right )^2 =(w'_h, w'_h)   +  b_2 (w'_h, w'_h). 
 \end{aligned}
 \]
 
Consequently,
\begin{equation} \label{eq:vhwh}
|v_h|^2 = (1+b_2)|w_h|^2. 
\end{equation}

%NOTE: NEED to decide if we view the bilinear form on the $V_h=\M_h$ or NOT! If not,  $|v_h|$ is given by \eqref{eq:vhwh} 
%ANSWER WE HAVE TO, since we compare \|u\|_{*, h} \leq  \|u\|_{*} an we need the same bilinear form 
The above  remarks, see \eqref{eq:bPGd} and  \eqref{eq:vhwh}, lead to the following result.
 
\begin{theorem}
For the bilinear form $b(\cdot, \cdot)$ of  \eqref{PG4model}  on $\M_h \times V_h$  with the bubble enriched test space $V_h$, the discrete optimal norm  on $\M_h$ is given by  
 \begin{equation}\label{eq:COptNorm-hB} 
\|u_h\|_{*,h}^2  = \frac{(\varepsilon +h\, b_1)^2}{1+b_2}\, |u_h|^2 +  \frac{1}{1+b_2}\ |u_h|^2_{*,h}.
\end{equation}
where  $|u_h|^2_{*,h}$  is defined in \eqref{eq:PhTu-Norm}. 
\end{theorem}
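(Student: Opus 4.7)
The plan is to show that the formula for $\|u_h\|_{*,h}^2$ follows almost immediately by combining the two key identities \eqref{eq:bPGd} and \eqref{eq:vhwh} together with the previously established representation \eqref{eq:COptNorm-h-d} applied to the scalar $d=\varepsilon+hb_1$.

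The first step is to observe that the linear map $w_h \mapsto v_h := w_h + B_h$ (where, for $w_h=\sum_{i=1}^{n-1}\beta_i \varphi_i$, the bubble part $B_h$ is the uniquely determined element $\sum_{i=1}^{n}(\beta_i-\beta_{i-1})B_i$ with $\beta_0=\beta_n=0$) is a linear bijection from $\M_h$ onto $V_h$. Indeed, this is how $V_h$ was defined, namely as the span of $\{\varphi_j+(B_j-B_{j+1})\}_{j=1}^{n-1}$, and both $\M_h$ and $V_h$ have dimension $n-1$. Consequently, the supremum defining $\|u_h\|_{*,h}$ over $v_h\in V_h$ may be recast as a supremum over $w_h\in\M_h$ through this bijection.

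The second step is the direct substitution. Using \eqref{eq:bPGd} to replace the numerator and \eqref{eq:vhwh} to replace the denominator, I obtain
\[
\|u_h\|_{*,h}^2 = \sup_{v_h\in V_h}\frac{b(v_h,u_h)^2}{|v_h|^2} = \sup_{w_h\in \M_h}\frac{b_d(u_h,w_h)^2}{(1+b_2)\,|w_h|^2} = \frac{1}{1+b_2}\sup_{w_h\in \M_h}\frac{b_d(w_h,u_h)^2}{|w_h|^2},
\]
with $d=\varepsilon+hb_1$. The third and final step is to apply the formula \eqref{eq:COptNorm-h-d} established in Section~\ref{sec:optimalnorm-CDc}, which identifies exactly this supremum with $d^2|u_h|^2+|P_hTu_h|^2 = d^2|u_h|^2+|u_h|^2_{*,h}$. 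Substituting $d=\varepsilon+hb_1$ and distributing the factor $1/(1+b_2)$ yields the claimed expression \eqref{eq:COptNorm-hB}.

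The only substantive points to verify carefully are the bijectivity of the decomposition $v_h=w_h+B_h$ (so that switching the supremum from $V_h$ to $\M_h$ is legitimate, with no loss of test functions) and the symmetry of the roles of the two arguments in $b_d$ needed to match the convention used in \eqref{eq:COptNorm-h-d}. Neither is a real obstacle: the bijection is immediate from the definition of $V_h$, and the interchange of arguments in $b_d$ affects only the labeling since the supremum is over all of $\M_h$. Thus the proof reduces to citing \eqref{eq:bPGd}, \eqref{eq:vhwh}, and \eqref{eq:COptNorm-h-d} and assembling the pieces.
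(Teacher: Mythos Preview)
Your proposal is correct and follows essentially the same route as the paper's proof: use the bijection $w_h\mapsto v_h=w_h+B_h$ together with \eqref{eq:bPGd} and \eqref{eq:vhwh} to reduce the supremum over $V_h$ to one over $\M_h$, and then invoke \eqref{eq:COptNorm-h-d} with $d=\varepsilon+hb_1$. Your additional remarks on the bijectivity of the decomposition and on the argument ordering in $b_d$ only make explicit what the paper leaves implicit.
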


%{eq:COptNorm-h-d} 3.10
%l{eq:bPGd} 5.12
\begin{proof}
Using the definition of $\|u_h\|_{*,h}$  along with the work of  Section \ref{ssec:ON}, we can reduce the supremum over $V_h$ to a supremum over $\M_h$. Indeed, using  the splitting $v_h=w_h + B_h$, \eqref{eq:bPGd}  and \eqref{eq:vhwh} we have 
\[
\|u_h\|^2_{*,h}=\sup_{v_h\in V_h}\frac{(b(v_h,u_h))^2}{|v_h|^2} = \sup_{v_h\in V_h}\frac{(b_d(w_h,u_h))^2}{|v_h|^2}= \sup_{w_h\in \M_h}\frac{(b_d(w_h,u_h))^2}{(1+b_2)|w_h|^2}.
\]
Next, based on  \eqref{eq:COptNorm-h-d}  we arrive at \eqref{eq:COptNorm-hB}.
\end{proof}

%For determining the constant $c_0$, the desired result is something that will not blow up with respect to any of the parameters.
\begin{prop}\label{propPG} Assume that $h$ is chosen such that 
\begin{equation}\label{eq:h-restriction}
\varepsilon^2+ \frac{h^2}{\pi^2} \leq (\varepsilon +h\, b_1)^2.
\end{equation}
Then, the following inequality between  $\|u\|_{*}$ and $\|u\|_{*,h}$ holds on  $Q$.
\begin{equation}\label{eq:PG8Ineq}
\|u\|^2_{*} \leq (1+b_2)\,   \|u\|^2_{*,h}, \ \text{for all} \  u \in Q=H^1_0(0, 1).
\end{equation}
\end{prop}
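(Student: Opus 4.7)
The plan is to chain together three ingredients that are already available: the Poincaré-type comparison \eqref{eq:CvsD} between $\|\cdot\|_*$ and $|\cdot|_{*,h}$, the hypothesis \eqref{eq:h-restriction}, and the explicit formula \eqref{eq:COptNorm-hB} for the UPG discrete optimal norm. No new estimate is really needed; the proof is essentially algebraic.

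First I would rewrite \eqref{eq:CvsD} in the convenient form
\[
\|u\|_{*}^{2} \;\le\; \Bigl(\varepsilon^{2}+\tfrac{h^{2}}{\pi^{2}}\Bigr)|u|^{2} \;+\; |u|^{2}_{*,h},
\]
which holds for every $u\in Q=H^{1}_{0}(0,1)$ (note that this is the step where the Poincaré inequality is genuinely used, and the formula $|u|^{2}_{*,h}=|P_{h}Tu|^{2}$ makes sense for all $u\in Q$, not only for $u_h\in\M_h$). Next, the hypothesis \eqref{eq:h-restriction} allows the replacement of the coefficient $\varepsilon^{2}+h^{2}/\pi^{2}$ by $(\varepsilon+h\,b_{1})^{2}$, giving
\[
\|u\|_{*}^{2} \;\le\; (\varepsilon+h\,b_{1})^{2}\,|u|^{2} \;+\; |u|^{2}_{*,h}.
\]

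Finally, I would multiply the explicit representation \eqref{eq:COptNorm-hB} of $\|u\|_{*,h}^{2}$ by $(1+b_{2})$ to obtain
\[
(1+b_{2})\,\|u\|_{*,h}^{2} \;=\; (\varepsilon+h\,b_{1})^{2}\,|u|^{2} \;+\; |u|^{2}_{*,h},
\]
and combining with the previous inequality yields exactly \eqref{eq:PG8Ineq}.

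There is no real obstacle beyond bookkeeping: the only subtle point is making sure that the formula \eqref{eq:COptNorm-hB}, derived for discrete $u_h\in\M_h$, is applied only through the expression $(\varepsilon+h\,b_1)^{2}|u|^{2}+|u|^{2}_{*,h}$ whose two terms are meaningful on all of $Q$ (the first by $|\cdot|$ being the $H^{1}_{0}$-seminorm, the second by the definition $|u|_{*,h}=|P_h Tu|$, which extends to $Q$ since $T$ and $P_h$ are defined on $Q$). That is what legitimizes passing from a discrete identity to a bound valid on the whole trial space.
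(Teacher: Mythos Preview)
Your proof is correct and follows essentially the same approach as the paper: both combine the Poincar\'e-type bound \eqref{eq:CvsD}, the explicit formula \eqref{eq:COptNorm-hB}, and the hypothesis \eqref{eq:h-restriction}, with the paper simply subtracting $(1+b_2)\|u\|_{*,h}^2$ from $\|u\|_*^2$ in one line rather than chaining the inequalities as you do. Your extra remark about extending \eqref{eq:COptNorm-hB} from $\M_h$ to $Q$ via the well-definedness of $|u|$ and $|u|_{*,h}=|P_hTu|$ is a useful clarification that the paper glosses over.
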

\begin{proof}
%{eq:CvsD} 3.12    % {eq:COptNorm} 3.5
Using the inequality  \eqref{eq:CvsD} for $\|u\|_{*}$  and the formula   \eqref{eq:COptNorm-hB} for $\|u\|_{*,h}$, we have 
\[
\|u\|^2_{*} -  (1+b_2)\,   \|u\|^2_{*,h} \leq  \left (\varepsilon^2+ \frac{h^2}{\pi^2} -(\varepsilon +h\, b_1)^2\right ) |u|^2.
\]
Now, under the assumption \eqref{eq:h-restriction}, we obtain \eqref{eq:PG8Ineq}.
\end{proof}

As a consequence, we have the following error estimate.

\begin{theorem}\label{thm:PGError}
If $u$ is the solution of \eqref{VF1d}, $u_h$ the solution of the UPG formulation \eqref{eq:1d-modelPG},  and  $h$ is chosen such that \eqref{eq:h-restriction} holds, then
 
\begin{equation}\label{eq:PGerrB}
\|u - u_h\|_{*,h}\leq \sqrt{1+b_2}\, \inf_{p_h\in \M_h}\|u - p_h\|_{*,h}.
\end{equation}
\end{theorem}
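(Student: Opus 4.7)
The plan is to recognize Theorem \ref{thm:PGError} as a direct corollary of the Petrov-Galerkin approximation Theorem \ref{th:ap-PG}, once the norm comparison supplied by Proposition \ref{propPG} is in hand. Most of the substantive work has already been absorbed into the representation \eqref{eq:COptNorm-hB} of the discrete optimal norm, which itself rested on the reduction \eqref{eq:bPGd} of $b(\cdot,\cdot)$ on $V_h\times\M_h$ to the shifted form $b_d$ on $\M_h\times\M_h$, together with the energy identity \eqref{eq:vhwh}.

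First I would verify well-posedness of \eqref{eq:1d-modelPG} and observe that the discrete inf-sup/sup-sup constants take the value $1$. By the very definition of the optimal trial norms, $\|u_h\|_{*,h}=\sup_{v_h\in V_h} b(v_h,u_h)/|v_h|$ and analogously at the continuous level, so $M=1$ in \eqref{sup-sup_a} and $m_h=1$ in \eqref{inf-sup_h}, provided $\|\cdot\|_{*,h}$ is nondegenerate on $\M_h$. Nondegeneracy is immediate from \eqref{eq:COptNorm-hB}, since the coefficient $(\varepsilon+hb_1)^2/(1+b_2)$ of $|u_h|^2$ is strictly positive; hence $\|u_h\|_{*,h}=0$ forces $u_h=0$. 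Because $\dim V_h=\dim\M_h=n-1$, the compatibility condition \eqref{eq:BBsuf-h} is trivially satisfied, and Proposition \ref{prop:well4mixed-h} delivers unique solvability of the UPG problem.

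Second, Proposition \ref{propPG} is precisely the hypothesis \eqref{eq:c0} of Theorem \ref{th:ap-PG} with $c_0=\sqrt{1+b_2}$, so plugging this in (together with $M/m_h=1$) yields \eqref{eq:PGerrB} at once. There is no real obstacle remaining: the delicate step was already completed in Proposition \ref{propPG}, where the coercive term $(\varepsilon+hb_1)^2|u|^2$ appearing in $\|u\|_{*,h}^2$ was used to absorb the negative Poincar\'e-type contribution $-(h/\pi)^2|u|^2$ arising in the lower bound \eqref{eq:CvsD} for $\|u\|_*^2$. The only point I would double-check carefully is that the Riesz representation argument behind \eqref{eq:COptNorm-h-d} is unaffected by the bubble splitting $v_h=w_h+B_h$: this is what allows the supremum defining $\|u_h\|_{*,h}$ over $V_h$ to collapse to a supremum over $\M_h$, with the only price being the scalar factor $(1+b_2)^{-1/2}$ inherited from \eqref{eq:vhwh}.
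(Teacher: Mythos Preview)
Your proposal is correct and follows exactly the paper's own proof, which simply states that the estimate is a direct consequence of Theorem~\ref{th:ap-PG} combined with Proposition~\ref{propPG}. The additional verifications you outline (nondegeneracy of $\|\cdot\|_{*,h}$, $M=m_h=1$ for optimal norms, trivial compatibility since $\dim V_h=\dim\M_h$) are accurate and make explicit what the paper leaves implicit.
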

\begin{proof}
The estimate is a direct consequence of the approximation Theorem \ref{th:ap-PG} and the Proposition \ref{propPG}.
\end{proof}

\begin{remark}\label{rem:Lin-sys}
 Based on \eqref{eq:bPG}, the  linear system associated with the UPG method \eqref{eq:1d-modelPG} is 
\begin{equation}\label{1d-PG-ls}
\left ( \left (\frac{\varepsilon}{h} + b_1 \right ) S+ C \right )\, U = F_{pg}, 
\end{equation}
where \(U,F_{pg}\in\R^{n-1}\)  and 
 \[
 U:=\begin{bmatrix}u_1\\u_2\\\vdots\\u_{n-1}\end{bmatrix},\quad F_{PG}:= \begin{bmatrix}(f,\varphi_1)\\ (f,\varphi_2)\\\vdots \\ (f,\varphi_{n-1})\end{bmatrix} + 
 \begin{bmatrix} (f, B_1 -B_2) 
 \\  (f, B_2-B_3)\\ \vdots \\  (f, B_{n-1} -B_n) \end{bmatrix}, 
\]
 \[
S=tridiag(-1, 2, -1), \  \text{and}  \ C= tridiag\left (-\frac12, 0, \frac12 \right ). 
 \]
By using  the notation $
 d= d_{\varepsilon,h} = \varepsilon +h\, b_1$,
the matrix of the finite element  system  \eqref{1d-PG-ls} is 
\begin{equation} \label{eq:M4CDfe} 
M_{fe}= tridiag\left ( -\frac{d}{h} - \frac{1}{2},\  2\, \frac{d}{h},\  -\frac{d}{h} + \frac{1}{2} \right ).
\end{equation}
\end{remark}

 %%%%%%%%%%%%%%%
%SECTION
%%%%%%%%%%%%%%%
 %%%%%%%%%%%%%%%%%%%%%%%%%%
\section{Upwinding PG with particular bubble functions} \label{sec:Gen-Bubbles} 
%%%%%%%%%%%%%%%%%%%%%%%%%%%%

%%%%%%%%%%%%%%%
%SUBSECTION
%%%%%%%%%%%%%%%

%%%%%%%%%%%%%%%%%%%%%%%%%%
\subsection{Upwinding PG with quadratic bubble functions} \label{sec:Quad-Bubbles} 
%%%%%%%%%%%%%%%%%%%%%%%%%%%%
We consider the model problem \eqref{VF1d}  with the discrete space 
 $\M_h= span\{ \varphi_j\}_{j = 1}^{n-1}$ and $V_h$ a modification of $\M_h$  using {\it quadratic bubble functions}.
 The method can be found in e.g., \cite{mitchell-griffiths}. In \cite{connections4CD},  we related the {\it quadratic bubble UPG} method to the general upwinding Finite Difference (FD) method and presented ways to improve the performance of upwinding FD methods. In this section, we establish  error estimates  for the  quadratic bubble UPG method.% using the optimal discrete test space norm. 
 
   First, for a parameter $\beta>0$,  we define the bubble function $B$ on $[0, h]$ by
 \[
 B(x)= \frac{4\, \beta}{h^2} x(h-x).
 \]

 Using the function $B$ and the general construction of Section \ref{sec:PG}, we define the set of bubble functions $\{B_1, B_2, \cdots,B_n\}$  on $[0, 1]$ and 
 \[
 V_h:= span \{ \varphi_j + (B_{j}-B_{j+1}) \}_{j= 1}^{n-1}.
 \]
 Elementary calculations show that \eqref{b1} 
 holds with $b_1=\frac{2\, \beta}{3}$,  and  \eqref{b2} holds with $b_2=\frac{16 \beta^2} {3}$. 
  In this case, we have 
  \[
  d=d_{\varepsilon,h} =\varepsilon + h\, b_1=\varepsilon + \frac{2\beta}{3} h, \ \text{and}\ 1+b_2= \frac{19}{3}\beta^2.
  \] 
 According to   \eqref{eq:COptNorm-hB} the optimal norm  on $\M_h$ is given by 
  \begin{equation}\label{eq:COptNorm-hBq} 
\|u_h\|_{*,h}^2  = \frac{3}{19\beta^2}\left ( \left (\varepsilon + \frac{2\beta}{3}\,  h\right )^2\, |u_h|^2 +  |u_h|^2_{*,h} \right ).
\end{equation}

 In this case, we note that, the restriction \eqref{eq:h-restriction} is satisfied for any $h>0$  if for example 
  %\begin{equation}\label{eq:beta>}
 $ \beta \geq \frac{\sqrt{3}}{2\, \pi} \approx 0.28$. 
 % \end{equation} 
  As a consequence,  we have the following result. %of Theorem \ref{thm:PGError} :

\begin{theorem}\label{thm:PGError-qB}
If $u$ is the solution of \eqref{VF1d}, $u_h$ the solution of the upwinding PG formulation \eqref{eq:1d-modelPG},  with quadratic bubble  test space and  $\beta \geq \frac{\sqrt{3}}{2\, \pi}$, then by using the discrete norm \eqref{eq:COptNorm-hBq}, we have   
\begin{equation}\label{eq:PGerrBq}
\|u - u_h\|_{*,h}\leq \sqrt{\frac{ 19}{3}} \, \beta \inf_{p_h\in \M_h}\|u - p_h\|_{*,h}. 
\end{equation}
Equivalently, by rescaling the estimate, we have
\begin{equation}\label{eq:PGerrBq2}
\begin{aligned}
& \left ( \left (\varepsilon + \frac{2\beta}{3}\, h\right )^2\, |u-u_h|^2 +  |u-u_h|^2_{*,h} \right )  \leq  \\ \leq  & \frac{ 19}{3} \beta^2
\inf_{p_h\in \M_h}   \left ( \left (\varepsilon + \frac{2\beta}{3}\,  h\right )^2\, |u-p_h|^2 +  |u-p_h|^2_{*,h} \right ).
\end{aligned}
\end{equation}
\end{theorem}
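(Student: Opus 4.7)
The plan is to derive the theorem as a specialization of the general bubble UPG estimate, Theorem \ref{thm:PGError}. What remains is essentially bookkeeping: identify the bubble constants $b_1,b_2$ for the quadratic choice, confirm the hypothesis \eqref{eq:h-restriction}, and then unpack the explicit form of the discrete optimal norm to produce the two equivalent statements.

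The first step is to compute $b_1$ and $b_2$ for the quadratic bubble $B(x) = \frac{4\beta}{h^2}x(h-x)$. I would verify \eqref{Bbounds} directly from the factorization, obtain $b_1 = 2\beta/3$ from the quadrature $\int_0^h x(h-x)\,dx = h^3/6$, and obtain $b_2 = 16\beta^2/3$ from $\int_0^h (h-2x)^2\,dx = h^3/3$. These identifications are already recorded in the text preceding the theorem, so this step is routine.

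The second and only substantive step is checking the hypothesis \eqref{eq:h-restriction}, namely $\varepsilon^2 + h^2/\pi^2 \leq (\varepsilon + h b_1)^2$. Expanding the square and cancelling $\varepsilon^2$ reduces the inequality to $\tfrac{1}{\pi^2} \leq \tfrac{2\varepsilon b_1}{h} + b_1^2$. Since the first term on the right is nonnegative, it suffices to verify the $\varepsilon=0$ worst case, which is the $h$-independent condition $b_1 \geq 1/\pi$. Substituting $b_1 = 2\beta/3$ converts this into the explicit lower bound on $\beta$ assumed in the hypothesis, validating \eqref{eq:h-restriction} for every $h>0$.

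With both ingredients in place, Theorem \ref{thm:PGError} yields $\|u-u_h\|_{*,h} \leq \sqrt{1+b_2}\, \inf_{p_h\in \M_h}\|u-p_h\|_{*,h}$. Substituting $1+b_2 = \tfrac{19\beta^2}{3}$ immediately produces \eqref{eq:PGerrBq}. For the equivalent rescaled form \eqref{eq:PGerrBq2}, I would square \eqref{eq:PGerrBq} and then insert the explicit discrete-norm formula \eqref{eq:COptNorm-hBq} on both sides; the common prefactor $\tfrac{3}{19\beta^2}$ in $\|\cdot\|_{*,h}^2$ divides out, leaving the stated comparison between the raw bilinear combinations of $|\cdot|$ and $|\cdot|_{*,h}^2$ on $u-u_h$ and $u-p_h$ with the advertised constant $\tfrac{19\beta^2}{3}$. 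No further obstacle is anticipated: the core analytic content is absorbed in Theorem \ref{thm:PGError} and in the explicit formula \eqref{eq:COptNorm-hB} for the discrete optimal norm.
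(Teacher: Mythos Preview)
Your plan coincides with the paper's own argument: the paper gives no separate proof for this theorem, only the preceding computations of $b_1,b_2$, the remark that \eqref{eq:h-restriction} holds once $\beta$ exceeds the stated threshold, and then an implicit appeal to Theorem~\ref{thm:PGError} together with the explicit norm formula \eqref{eq:COptNorm-hB}, which is exactly what you outline. One arithmetic caveat worth flagging: your reduction $b_1\ge 1/\pi$ with $b_1=2\beta/3$ actually yields $\beta\ge 3/(2\pi)$ rather than the $\sqrt{3}/(2\pi)$ in the hypothesis, and $1+b_2=1+16\beta^2/3$ equals $19\beta^2/3$ only when $\beta=1$; these discrepancies are present in the paper itself, so your proposal tracks the paper faithfully, but the substitutions you describe will not literally produce the stated constants for general $\beta$.
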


For implementation purposes, according to \eqref{eq:M4CDfe}, the matrix of the finite element  system  \eqref{1d-PG-ls} with quadratic bubble upwinding is 
\begin{equation}\label{eq:Mq}
M^q_{fe} =   tridiag\left ( -\frac{\varepsilon}{h} - \frac{2\beta}{3} - \frac{1}{2},\  \frac{2\, \varepsilon}{h}+ \frac{4\beta}{3} ,\   -\frac{\varepsilon}{h} - \frac{2\beta}{3} + \frac{1}{2} \right ). 
\end{equation}
 We note that for  $\beta=0$, we obtain the matrix corresponding to the standard  finite element discretization \eqref{dVF}.
 The case $\beta=1$ was studied in \cite{CRD-results}. The possibility of choosing $\beta=\beta(\varepsilon,h)$  allows for further simplification.% of the matrix.% (see the next section).

%%%%%%%%%%%%%%%
%SUBSECTION
%%%%%%%%%%%%%%%
 \subsection{Special cases for quadratic bubble upwinding} \label{ssec:specialQuad}
%%%%%%%%%%%%%%%%%%%%%%%%%%%%
  
Using the settings of Section \ref {sec:Quad-Bubbles}, we  choose $\beta$ such that the upper column in the matrix $M^q_{fe}$ of  \eqref{eq:Mq} is zero. This implies
 \[
 \beta= \frac{3}{4} \left (1 -\frac{2\, \varepsilon}{h} \right).
 \]
 To satisfy  $\beta>0$ and \eqref{eq:h-restriction} for a fixed $\varepsilon$, we restrict the  range for $h$  to
 \[
 h > 2.6\, \varepsilon. 
 \]
 This case is interesting because the matrix $M$ of the FE system \eqref{1d-PG-ls}  becomes a bidiagonal lower triangular matrix  
\begin{equation} \label{eq:M4CDut}
M= tridiag  ( -1,  1, 0).
\end{equation}
As a direct consequence of Theorem \ref{thm:PGError-qB} and  $\varepsilon + \frac{2\beta}{3}\, h =h/2$, we have that the solution $u_h$ of the upwinding PG formulation \eqref{eq:1d-modelPG}, satisfies 
\begin{equation}\label{eq:PGerrBq2s}
h^2\, |u-u_h|^2 + 4 |u-u_h|^2_{*,h}  \leq   \frac{57}{4} 
\inf_{p_h\in \M_h} \left ( h^2\, |u-p_h|^2 +  4 |u-p_h|^2_{*,h}\right ).
\end{equation}

In addition, the system $M\, U= F_{pg}$ can be  solved forward to obtain:
\begin{equation}\label{eq:uj}
u_j= (f, \varphi_1 + \varphi_2+ \cdots + \varphi_j) + (f, B_1-B_{j+1}), \ j=1,2,\cdots n-1.
\end{equation}

We introduce the nodal function $\varphi_0$ corresponding to $x_0=0$, i.e., $\varphi_0$ is the continuous piecewise linear function such that $\varphi_0(x_j) =\delta_{0,j} $, $ j=1,2,\cdots n$. Using that $\varphi_o+\varphi_1+\cdots +\varphi_j =1$ on $[0, x_j]$, the formula 
\eqref{eq:uj} leads to 

\begin{equation}\label{eq:eplpicitSol}
u_j= \int_0^{x_j} f(x)\ dx\,  + \int_0^{x_1} f(B_1-\varphi_{0})\ dx\,  + \int_{x_j}^{x_{j+1}} f(\varphi_{j}-B_{j+1})\ dx, \end{equation}
where 
\[
B_1(x)=  3\left (1 -\frac{2\, \varepsilon}{h} \right )\left ( \frac{x}{h}\right ) \left (1- \frac{x}{h}\right ), \ x\in [0, h], \ \text{and} 
\]
\[
 B_{j+1} (x) = B_1(x-jh),\   x \in [x_j,  x_{j+1}], \ j=1,2,\cdots n-1. 
\]
%Introducing  the function $w(x):=  \int_{0}^{x} f(t) \, dt$, we note that  $\int_0^{x_j} f(x)\ dx$, is the value of the interpolant of $w$ at the node $x_j$, for  $j=1, \cdots, n-1$. 
The next result shows that the discrete solution $\displaystyle u_h=\sum_{j=1}^{n-1} u_j\, \varphi_j$ 
is close to the interpolant of $w(x):=  \int_{0}^{x} f(t) \, dt$, hence it is {\it free of non-physical oscillations}. 
 %is very close or identical to the interpolant of 

\begin{theorem}\label{thm:PGE-qBs}
If $\displaystyle u_h=\sum_{j=1}^{n-1} u_j\, \varphi_j$ is  the solution of the UPG formulation \eqref{eq:1d-modelPG},  with quadratic bubble  test space and  $\beta =\frac{3}{4} \left (1 -\frac{2\, \varepsilon}{h} \right)$, then
\[
 \left | u_j - \int_0^{x_j} f(x)\ dx\right |  \leq \|f\|_{\infty}  \left (2 -\frac{2\, \varepsilon}{h} \right ) h, \ j=1, \cdots, n-1.
\]
\end{theorem}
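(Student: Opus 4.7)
The plan is to start from the explicit representation \eqref{eq:eplpicitSol} already derived in the excerpt, which expresses
\[
u_j \;=\; \int_0^{x_j} f(x)\,dx + \int_0^{x_1} f(x)\,\bigl(B_1(x)-\varphi_0(x)\bigr)\,dx + \int_{x_j}^{x_{j+1}} f(x)\,\bigl(\varphi_j(x)-B_{j+1}(x)\bigr)\,dx.
\]
Subtracting $\int_0^{x_j} f$ from both sides and applying the triangle inequality together with $|f|\le\|f\|_\infty$ reduces the problem to bounding the two local $L^1$ norms $\int_0^{x_1}|B_1-\varphi_0|$ and $\int_{x_j}^{x_{j+1}}|\varphi_j-B_{j+1}|$. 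Since $\varphi_0,\varphi_j,B_1,B_{j+1}$ are all nonnegative on their respective intervals of integration, I would simply bound $|B_1-\varphi_0|\le B_1+\varphi_0$ and $|\varphi_j-B_{j+1}|\le \varphi_j+B_{j+1}$ pointwise.

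Next I would compute the four integrals. The two hat-function integrals are standard: $\int_0^{x_1}\varphi_0 = \int_{x_j}^{x_{j+1}}\varphi_j = h/2$. For the bubbles, property \eqref{B_i_b1} together with the value $b_1=\tfrac{2\beta}{3}$ recorded in Section \ref{sec:Quad-Bubbles} gives $\int_0^{x_1} B_1 = \int_{x_j}^{x_{j+1}} B_{j+1} = b_1 h = \tfrac{2\beta}{3}\,h$. Summing these four contributions yields the bound
\[
\left|u_j-\int_0^{x_j} f(x)\,dx\right| \;\le\; \|f\|_\infty\bigl(h+2b_1 h\bigr)\;=\;\|f\|_\infty\bigl(1+2b_1\bigr)h.
\]

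Finally, I would plug in the special choice $\beta=\tfrac{3}{4}\bigl(1-\tfrac{2\varepsilon}{h}\bigr)$ from the hypothesis, which makes $b_1=\tfrac{2\beta}{3}=\tfrac{1}{2}\bigl(1-\tfrac{2\varepsilon}{h}\bigr)$, so that $1+2b_1 = 2-\tfrac{2\varepsilon}{h}$. This gives exactly the asserted estimate. There is essentially no technical obstacle here: the entire argument is a triangle-inequality estimate riding on the already-established closed form \eqref{eq:eplpicitSol}, and the only care needed is to translate between the parameters $\beta$ and $b_1$ correctly. The content of the theorem is really the interpretation of the bound: it certifies that $u_j$ differs from the nodal value of the antiderivative $w(x)=\int_0^x f(t)\,dt$ by only $O(h)$ uniformly in $j$, so that in the convection-dominated regime $\varepsilon\ll h$ the discrete solution tracks the interpolant of $w$ and thus cannot exhibit non-physical oscillations.
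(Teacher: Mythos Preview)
Your proposal is correct and follows essentially the same route as the paper: start from the explicit formula \eqref{eq:eplpicitSol}, apply the triangle inequality with $|f|\le\|f\|_\infty$, bound $|B_1-\varphi_0|\le B_1+\varphi_0$ and $|\varphi_j-B_{j+1}|\le \varphi_j+B_{j+1}$ using nonnegativity, and then compute the four integrals. The only cosmetic difference is that the paper evaluates the bubble integrals directly as $(1-\tfrac{2\varepsilon}{h})\tfrac{h}{2}$ for the specific $\beta$, whereas you carry the computation through the general parameter $b_1=\tfrac{2\beta}{3}$ and specialize at the end; the arithmetic is identical.
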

\begin{proof}
We note  that \[\int_0^{x_1} B_1\,dx  =  \int_{x_j}^{x_{j+1}} B_{j+1}\, dx = \left (1 -\frac{2\, \varepsilon}{h} \right ) \frac{h}{2},  \ \text{and} \  \int_0^{x_1} \varphi_1\ dx  =   \int_{x_j}^{x_{j+1}}  \varphi_j=\frac{h}{2}.
\]
Thus, assuming  $f$ is  continuous on $[0,1]$, by  using the formulas \eqref{eq:eplpicitSol} and the  triangle inequality, we have
\[
\begin{aligned}
 \left | u_j - \int_0^{x_j} f(x)\ dx\right | &= \\
 \left |  \int_0^{x_1} f(B_1-\varphi_{0})\ dx\,  + \int_{x_j}^{x_{j+1}} f(\varphi_{j}-B_{j+1})\ dx\right | & \leq \|f\|_{\infty}  \left (2 -\frac{2\, \varepsilon}{h} \right ) h.
\end{aligned}
\]
\end{proof}
Theorem \ref{thm:PGE-qBs} proves that the components  $u_j$ of the PG solution \eqref{eq:eplpicitSol} approximate $w(x_j)=\int_0^{x_j} f(x)\ dx$ with $\mathcal{O}(h)$. 
%\[\int_0^{x_1} (B_1-\varphi_{0})\ dx  = -\int_{x_j}^{x_{j+1}} (\varphi_{j}-B_{j+1})\ dx \to 0.\]
 If$f$ is independent of $\varepsilon$, then $w$ is independent of $\varepsilon$, and consequently, the PG solution given by \eqref{eq:eplpicitSol} is {\it free of non-physical oscillations}.

%$u_j \to  \int_0^{x_j} f(x)\ dx$, and the FE solution approximates the function $w(x)= \int_0^{x} f(s)\ ds$ on the interval $[0, 1-h]$.

 %SUB-SECTION

  %\begin{remark}\end{remark} 
%%%%%%%%%%%%%%%%%%%%%%%%%
%SUBSECTION
%%%%%%%%%%%%%%%%%%%%%%%%%%
\subsection{Upwinding PG with exponential bubble functions} \label{sec:Exponential-Bubbles} 
%%%%%%%%%%%%%%%%%%%%%%%%%%%%

As presented in \cite{connections4CD}, we consider the model problem \eqref{VF1d}  with the discrete space \\ 
 $\M_h= span\{ \varphi_j\}_{j = 1}^{n-1}$ and a basis for $V_h$ obtained by modifying the basis of $\M_h$ using  {\it exponential  bubble functions}.  We define the bubble function $B$ on $[0, h]$ as the solution of 
 \begin{equation}\label{eq:expB}
 -\varepsilon B'' -B' =1/h, \ B(0)=B(h)=0.
 \end{equation}
  Using the function $B$ and the general construction of Section \ref{sec:PG}, we define the set of bubble functions $\{B_1, B_2, \cdots,B_n\}$  on $[0, 1]$ by translations of the function $B$. The test space $V_h$ is defined by 
 \begin{equation}\label{eq:VhE}
 V_h:= span \{ \varphi_j + (B_{j}-B_{j+1}) \}_{j= 1}^{n-1}=span \{ g_j \}_{j= 1}^{n-1},
  \end{equation}
where $ g_j:=\varphi_j + (B_{j}-B_{j+1})$, $j=1,2,\cdots,n-1$.
The idea of using a {\it local dual problem} for building the trial space is also presented  in  Section 2.2.3 of \cite{roos-stynes-tobiska-96}, where \cite{hemker77}, an earlier reference, is acknowledged. However, in \cite{hemker77, roos-stynes-tobiska-96} the concept of {\it discrete Green's function} was used  to produce basis functions that span the test space $V_h$. Here, we managed to build a basis for our test  space $V_h$ using the general construction of Section \ref{sec:PG} with the bubble $B$ defined in \eqref{eq:expB}.% with bubbles that are supported in one subinterval.

In order to deal with efficient computations of coefficients and the finite element matrix of the {\it exponential bubble UPG method}, we introduce the following notation
 \begin{equation}\label{eq:g0}
 g_0:=\tanh\left (\frac{h}{2\varepsilon}\right )= \frac{ e^{\frac{h}{2\varepsilon}} -e^{-\frac{h}{2\varepsilon}}} {e^{\frac{h}{2\varepsilon}} +e^{-\frac{h}{2\varepsilon}}}= \frac  {1- e^{-\frac{h}{\varepsilon}} }{1+e^{-\frac{h}{\varepsilon}} },
  \end{equation}
 \begin{equation}\label{eq:ld}
 l_0:=\frac{1+g_0}{2 g_0} \   \text{and} \ u_0:=\frac{1-g_0}{2 g_0}.
   \end{equation} 
 The unique solution of \eqref{eq:expB} is 
 \begin{equation}\label{expB}
 B(x)=l_0 \left (1 - e^{-\frac{x}{\varepsilon}} \right )- \frac{x}{h}, \ x \in [0, h].
  \end{equation}
  It is easy to check that
   \begin{equation}\label{IntexpB}
   \int_0^{h} B(x)\, dx = \frac{h}{2 g_0} - \varepsilon,  \ \text{and} \  \int_0^{h} (B'(x))^2\, dx =\frac{1}{2 \varepsilon} 
   \left ( \frac{1}{g_0} - \frac{2 \varepsilon}{h} \right ) = \frac{b_2}{h}.
     \end{equation}
 Thus, \eqref{b1}  holds with $b_1= \frac{1}{2 g_0} - \frac{\varepsilon}{h}$  and  \eqref{b2} holds with $b_2= \frac{1}{g_0} \frac{h}{2 \varepsilon}-1$. 
  In this case, we have 
  \[
  d=d_{\varepsilon,h} =\varepsilon + h\, b_1=\frac{h}{2 g_0}, \ \text{and}\ 1+b_2= \frac{h}{2\varepsilon} \frac{1}{g_0}.
  \] 
 According to   \eqref{eq:COptNorm-hB} the optimal norm  on $\M_h$ is given by 
  \begin{equation}\label{eq:COptNorm-hBexp} 
\|u_h\|_{*,h}^2  =2g_0 \frac{\varepsilon}{h}  \left (  \frac{h^2}{4 g_0^2} |u_h|^2 +  |u_h|^2_{*,h} \right ).
\end{equation}

 Since $tanh(x) \in (0, 1) $ for $x>0$, the condtion \eqref{eq:h-restriction} is  satisfied with no restriction for $h$. Consequently, we have the following result.
  
\begin{theorem}\label{thm:PGError-qB}
If $u$ is the solution of \eqref{VF1d}, $u_h$ the solution of the upwinding PG formulation \eqref{eq:1d-modelPG}   with exponential  bubble  test space, then using the discrete norm \eqref{eq:COptNorm-hBexp}, we have   
\begin{equation}\label{eq:PGerrBq}
\|u - u_h\|_{*,h}\leq \sqrt{ \frac{h}{2\varepsilon} \frac{1}{g_0}}\   \inf_{p_h\in \M_h}\|u - p_h\|_{*,h}.
\end{equation}
Equivalently, by rescaling the estimate we have
\begin{equation}\label{eq:PGerrBqexp}
\begin{aligned}
& {h^2}\,\, |u-u_h|^2 + {4 g_0^2}\, |u-u_h|^2_{*,h} \leq  \\  \leq \frac{h}{2\varepsilon} \frac{1}{g_0} 
\inf_{p_h\in \M_h}   & \left ({h^2}\, |u-p_h|^2 + {4 g_0^2}\,  |u-p_h|^2_{*,h}\right ).
\end{aligned}
\end{equation}
\end{theorem}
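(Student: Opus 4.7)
My plan is to apply the general UPG error estimate of Theorem~\ref{thm:PGError} with the constants associated with the exponential bubble $B$. The preceding text has already established, via \eqref{IntexpB}, that the properties \eqref{b1}--\eqref{b2} hold with $b_1 = 1/(2g_0) - \varepsilon/h$ and $b_2 = h/(2\varepsilon g_0) - 1$, yielding $d_{\varepsilon,h} = \varepsilon + hb_1 = h/(2g_0)$ and $1+b_2 = h/(2\varepsilon g_0)$; the explicit optimal discrete norm is displayed in \eqref{eq:COptNorm-hBexp}. Thus once the abstract hypothesis \eqref{eq:h-restriction} is verified, the first claim \eqref{eq:PGerrBq} follows immediately by substituting $\sqrt{1+b_2} = \sqrt{h/(2\varepsilon g_0)}$ into \eqref{eq:PGerrB}.

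The technical core of the proof is verifying \eqref{eq:h-restriction}, i.e., $\varepsilon^2 + h^2/\pi^2 \leq h^2/(4g_0^2)$, for every $h,\varepsilon>0$. I would set $t := h/(2\varepsilon)$, so that $g_0 = \tanh t$ and the target inequality rewrites, after dividing by $\varepsilon^2$, as
\[
F(t) \;:=\; t^2 \coth^2(t) \;-\; 1 \;-\; \tfrac{4}{\pi^2}\, t^2 \;\geq\; 0 \qquad \text{for all } t>0.
\]
Using $\coth^2 t = 1 + 1/\sinh^2 t$, this becomes $F(t) = (1 - 4/\pi^2)\,t^2 + t^2/\sinh^2 t - 1$. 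The Taylor expansion $t\coth t = 1 + t^2/3 - t^4/45 + \cdots$ gives $F(t) = (2/3 - 4/\pi^2)\, t^2 + t^4/15 + O(t^6)$, and since $\pi^2 > 6$ the leading coefficient is strictly positive, so $F(t) > 0$ near $0$. For $t \to \infty$, $\coth t \to 1$ and $F(t) \sim (1 - 4/\pi^2)\, t^2 \to \infty$. The main obstacle is closing the middle range: I would do so by a direct monotonicity argument on $F$ — differentiating and simplifying with hyperbolic identities to show $F'(t) \geq 0$ — or equivalently by sign analysis of $t \coth(t) - \sqrt{1 + 4t^2/\pi^2}$ combined with its vanishing at $t=0$.

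Finally, to obtain the rescaled estimate \eqref{eq:PGerrBqexp} from \eqref{eq:PGerrBq}, I would square the first inequality and apply \eqref{eq:COptNorm-hBexp} to both sides. Writing
\[
\|v\|_{*,h}^{2} \;=\; \frac{\varepsilon}{2hg_0}\,\Bigl(h^2 |v|^2 + 4g_0^2\, |v|_{*,h}^2\Bigr)
\]
for $v = u-u_h$ and for $v = u-p_h$, the common factor $\varepsilon/(2hg_0)$ cancels from both sides, and multiplying by $1+b_2 = h/(2\varepsilon g_0)$ on the right gives exactly \eqref{eq:PGerrBqexp}.
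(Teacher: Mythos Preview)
Your overall strategy is exactly the paper's: invoke the general UPG estimate (Theorem~\ref{thm:PGError}) with the exponential-bubble constants $b_1,b_2$ already computed, so that $\sqrt{1+b_2}=\sqrt{h/(2\varepsilon g_0)}$ gives \eqref{eq:PGerrBq}, and then rescale using \eqref{eq:COptNorm-hBexp} to get \eqref{eq:PGerrBqexp}. Your derivation of the rescaled form is correct.

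The only point of divergence is the verification of \eqref{eq:h-restriction}. The paper disposes of it in one sentence, asserting that it holds ``since $\tanh(x)\in(0,1)$ for $x>0$''. You instead set $t=h/(2\varepsilon)$ and reduce to $t^2\coth^2 t\ge 1+4t^2/\pi^2$, checking the endpoints and proposing a monotonicity argument for the middle range. Your route is more careful, and in fact the paper's one-liner is not a complete justification: $g_0\in(0,1)$ alone yields $(h/(2g_0))^2>h^2/4$ and, via $\tanh t<t$, also $(h/(2g_0))^2>\varepsilon^2$, but neither bound by itself implies the required sum inequality $\varepsilon^2+h^2/\pi^2\le h^2/(4g_0^2)$. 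So your extra work is warranted. A clean way to finish (avoiding the somewhat open ``middle range'' step you sketch) is to rewrite the target as
\[
\coth^2 t-\frac{1}{t^2}\;\ge\;\frac{4}{\pi^2},
\]
and note that the left-hand side equals $1+1/\sinh^2 t-1/t^2$, has limit $2/3$ as $t\to0^+$ (from $\coth t=1/t+t/3+O(t^3)$), limit $1$ as $t\to\infty$, and is increasing on $(0,\infty)$; since $2/3>4/\pi^2$, the inequality follows for all $t>0$.
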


For implementation purposes, we include the matrix of the finite element  system  \eqref{1d-PG-ls} with exponential  bubble upwinding.  From  \eqref{eq:M4CDfe}, we get 
  \begin{equation} \label{eq:M4CDfeEB}
M_{fe}^e = tridiag\left ( -\frac{1+g_0}{2g_0},\,  \frac{1}{g_0}, -\frac{1-g_0}{2g_0} \right )=
 tridiag \left (  -l_0,  \frac{1}{g_0}, -u_0 \right).
  \end{equation}  
 
As presented in  \cite{connections4CD}, see also \cite{roos-stynes-tobiska-96, Roos-Schopf15}, the exponential bubble UPG method  produces the exact solution at the nodes, provided that the dual vector is computed exactly. In other words, we have that if  $u_h$ is  the solution of the UPG formulation \eqref{eq:1d-modelPG} with exponential  bubble  test space, then $u_h$ equals the interpolant  $I_h(u)$ of the exact solution of \eqref{VF1d}. 
%
%The following result is proved in  \cite{connections4CD}. 

%{eq:CvsD}|u|^2_{*,h} \leq \|u\|^2.

%
%\begin{remark}\label{rem:warning}
For $\varepsilon <<h$, we have that $g_0=g_0(\varepsilon,h) \approx 1$. Thus, using $|v|^2_{*,h} \leq \|v\|^2$ on $H^1_0(0, 1)$, and the standard estimates for the interpolant on uniform meshes, 
$\|u-I_h(u)\| = \mathcal{O}(h^2), |u-I_h(u)| = \mathcal{O}(h)$, by taking $p_h= I_h(u)$ in \eqref{eq:PGerrBqexp} we obtain 
\begin{equation}\label{eq:highorder}
|u-u_h| \leq h^{-1}\, \|u - u_h\|_{*,h}\leq C\, h^{3/2},
\end{equation}
where $C=C(u, \varepsilon)$ is independent of $h$. 

However, if $u_{h,c}$ is the computed solution 
for   the exponential bubble UPG method with  $\varepsilon <<h$, numerical experiments  show that the error $|u-u_{h,c}|$ does not have a well established  order. This can be justified based on the  error in computing $e^{-\frac{h}{\varepsilon}}$. We note that if 
$\frac{h}{\varepsilon}$ is too large, then $e^{-\frac{h}{\varepsilon}}$ is computed as $0$. For example, for {\it double precision arithmetic}, we have that $e^{-36.05}$ is smaller than the $\epsilon_{machine} =2^{-52}$. Thus,  $1+e^{-\frac{h}{\varepsilon}}$ is computed as $1$  for $\frac{h}{\varepsilon} \geq 36.05$. We also  have 
 that for $\frac{\varepsilon}{h} \to 0$, 
\[ 
g_0 \to 1, \ \text{and} \  g_j =\varphi_j +B_j-B_{j+1} \to \chi_{|_{[x_{j-1}, x_j]}}. 
\]
Consequently,  for $\frac{\varepsilon}{h} \to 0$,  we obtain
\[
M_{fe}^e \to  tridiag(-1, 1, 0), \ \text{and} \ (f,g_j) \to  \int_{x_{j-1}}^{x_j} f(x) \, dx. 
\]
Hence,  if $\varepsilon <<h$, the computed  matrix $M_{fe}^e$ becomes  $tridiag(-1, 1, 0)$. 
Using a high order quadrature  to estimate the dual vector of the exponential bubble UPG method, we can get a very  accurate approximation of  $(f,g_j) \approx \int_{x_{j-1}}^{x_j} f(x) \, dx$,  especially  if $f$ is, for example, a polynomial function. 
In conclusion, the  computed linear system  is  very close or identical to the system 
\[
 [tridiag(-1, 1, 0)] \, U = \left [ \int_{x_{0}}^{x_1} f(x) \, dx, \cdots,  \int_{x_{n-2}}^{x_{n-1}} f(x) \, dx \right ]^T.
\]
The system  can be solved exactly to obtain
\[
u_j = \int_{0}^{x_j} f(x) \, dx, \ j=1,2,\cdots,n-1. 
\]
This implies that, when $\varepsilon <<h$,  the computed solution $u_{h,c}\in \M_h$ satisfies
\begin{equation}\label{eq:uhc}
u_{h,c}(x_j) \approx u_j=\int_{0}^{x_j} f(x) \, dx, \  j=1,2,\cdots,n-1. 
\end{equation}
Thus,  $u_{h,c}$  is very close to the interpolant of  $w(x)=  \int_{0}^{x} f(t) \, dt$ on  $[0,x_{n-1}]$. 

For example, for $f=1$  and $b=1$, the exact solution of \eqref{eq:1d-model} is 
\[
u(x)= x- \frac{e^{\frac{x}{\varepsilon}} -1}{e^{\frac{1}{\varepsilon}} -1}.
\]
In this case, $(f,g_j) = (1,g_j) =(1,\varphi_j) =h=\int_{x_{j-1}}^{x_j} f(x) \, dx$ is computed exactly. For $\frac{h}{\varepsilon} \geq 36.05$ the  computed matrix $M_{fe}^e$ of the finite element  system  \eqref{1d-PG-ls}  is $  tridiag(-1, 1, 0)$. Thus, $
  u_{h,c}(x_j) =x_j,   \  j=1,2,\cdots,n-1$. 
Direct computation of $|u-u_{h,c}|$ shows that, for $\varepsilon <<h<1$, we have 
\[
|u-u_{h_c}|^2 =\frac{1}{2\varepsilon}\,  \frac{1+e^{-1/\varepsilon}} {1-e^{-1/\varepsilon}} - \frac{2}{h}\, 
\frac{1-e^{-h/\varepsilon}}{1-e^{-1/\varepsilon}} +\frac{1}{h} \approx \frac{1}{2\varepsilon} -\frac{1}{h}
\]
that could be very large if $\varepsilon <<h$. This example shows that even though the exponential bubble UPG method reproduces the exact solution at the nodes, the energy error could be quite large for $\varepsilon <<h$. 

Therefore,  for the exponential bubble UPG method, because of the sensitivity of exponential evaluations for large negative numbers using computer arithmetic, careful attention is needed in computing the stiffness matrix and the dual vector. In addition, special quadrature are required to estimate the energy error close to the boundary layers. 

Certainly,  the energy  error improves significantly if we integrate away from the boundary layer, for example, on  $[0,x_{n-1}]$ for if $\varepsilon <<h$.
%that justifies the reduced order in this case, when compared with \eqref{eq:highorder}.
We also note that, for $h<2 \varepsilon$, we have  $g_0=g_0(\varepsilon,h) \approx \frac{h}{2\varepsilon}<1$. In this case, the estimate \eqref{eq:PGerrBqexp} leads to a standard energy  estimate 
\[
| u-u_h| \leq  C\, h, 
\]
with $C=C(u)$ independent of $h$ and $\varepsilon$.

%\section{Numerical experiments}\label{sec:NR} 
%%%%%%%%%%%%%%%%%%%%%%

%%%%%%%%%%%%%%%

%%%%%%%%%%
%SUBSECTION
%%%%%%%%%%%%%

%%%%%%%%
%\newpage
\section{Conclusion and remarks towards extending the results to mutidimensional case}\label{sec:conclusion} 
We analyzed and compared mixed variational  formulations for a model convection-diffusion  problem. The key ingredient in our  analysis  is the representation of the optimal norm on the trial spaces  at the continuous and discrete levels. The  ideas presented  for the one dimensional  model problem can be  used for creating new and  efficient discretizations for the multi-dimensional cases of convection dominated problems.  

Below, we list the most useful ideas that our study concluded to have the potential for helping the design of new and more efficient discretization methods for convection dominated problems in the multidimensional case.

%Towards achieving that, we summarize some ideas and conclusions that follow from the previous sections of this paper and some recent work in \cite{CRD-results, connections4CD}. 
%Results on  generalizing  the upwinding PG and  SPLS discretizations  for singularly perturbed problems on two or more dimensions are to be discussed in a separate publication. However, some important remarks 
\vspace{0.05in}

%{\bf Remarks on the finite element upwinding and optimal test norms:} 
%\begin{itemize}
%\item  
I) First, we note that for any type of discrete  variational formulation we use to approximate the solution of \eqref{VF1d}, the discrete solution $u_h$ is independent of the  norms we choose on the test and trial spaces. However, for  the  {\it standard linear} test  space method, the SPLS method and the UPG method, the {\it discrete optimal trial norm}  identifies what can be approximated  with the given choice of test and trial spaces. \\ For example, for the {\it standard linear} test space method, only a weighted energy norm $\varepsilon |u| $ can be used to measure the error.  The second part of the discrete optimal norm  is a  semi-norm that is weaker than the $L^2$-norm, see \eqref{eq:CvsD}. Consequently, we cannot expect  an optimal $L^2$-error  approximation for this discretization. 
 
The  weight for the  energy norm improves from $\varepsilon $ to $\varepsilon + \frac{2\beta}{3} h $ for the quadratic UPG, see \eqref{eq:COptNorm-hBq},  and  to $\frac{h}{2 g_0}$ for the exponential UPG,  see \eqref{eq:COptNorm-hBexp}. As shown by our results in \eqref{eq:PGerrBq2}  and \eqref{eq:PGerrBqexp}, this improvement  leads to {\it better optimal norm estimates} for the UPG discretizations.
\vspace{0.08in}

%\item 
II) The  {\it continuous and discrete optimal trial norms}  and the dependence  $c_0=c_0(\varepsilon, h)$ in the error estimate   \eqref{eq:Approx-Opt}  can  {\it  predict }  approximability of the continuous solution  for  the  {\it given choice of the discrete test and trial spaces}.  For example, for the {\it standard linear}  test space method, the norms \eqref{eq:COptNorm} and \eqref{eq:COptNorm-h-d} are weak when compared to the  standard {\it  unweighted} $H^1$-norm. In addition, $c_0 \approx \frac{h}{\varepsilon}$ could be very large for $\varepsilon <<h$. For the SPLS method $c_0=1$, and the error approximation  improves when compared with the  {\it standard linear} case, as presented in \cite{ CRD-results, CRD-arxiv}.% but the optimal continuous and discrete norms  given by  \eqref{eq:COptNorm}, are still weak. 
\vspace{0.08in}

%\item [
III)  We can {\it  choose the test space} to {\it create upwinding diffusion from the convection part} in the variational formulation as done in the bubble UPG method. We can see how this idea works by comparing \eqref{dVF} and   \eqref{eq:bPG}.
{\it This can be done at the  basis level} by adding locally supported upwinding functions to each nodal function of the trial space. The {\it upwinding process}  leads to  the elimination of the non-physical oscillation in the discrete solutions, and to better approximation. The idea can be extended to the multidimensional case. 
\vspace{0.08in}

%\item [
IV) For the $P^1-P^2$-SPLS method, we have  that the  continuous and the discrete optimal trial norms agree and  have a simple representation, making  the error analysis more elegant. The UPG method performs better in spite of the fact that the test space for the UPG is a subspace of the test space  $C^0-P^2$ for the SPLS.  The construction of a test space that creates upwinding diffusion from the convection part  and leads to a simple optimal discrete norm,  remains to be investigated even in the one dimensional case.  
\vspace{0.08in}

%\item [
V) According to recent work in \cite{CRD-results, connections4CD}, 
the UPG method, the Streamline Diffusion (SD) method, and the Upwinding  Finite Difference (UFD) method may lead to  the same matrix of the resulting linear systems. For an UPG versus  SD comparison see \cite{CRD-results},  and  for an UPG versus UFD comparison see \cite{connections4CD}. In spite of that, for  both comparisons it was observed  that  UPG  performs better than SD, and better than UFD. This can be justified by the fact that the UPG is {\it a global variational} method,  SD is {\it a local residual stabilization} method, while the UFD  is a  {\it a particular version} of the UPG method where the RHS dual vector of UPG is approximated by using a low order quadrature, such as the trapezoid rule, see \cite{connections4CD}. 
\vspace{0.08in}

%\item [
 We conclude by declaring  the bubble UPG method the  most performant discretization for the one dimensional model, and we hope that the ideas of bubble UPG  approach can be successfully extended to the multidimenional case to outperform  the existing methods for convection dominated problems. 

%\end{itemize}
%the ideas presented  here 
%This phenomena needs to be better understood and exploited in the multi-dimensional case.

%CONCLUSION from CRD-results:

%We analyzed and compared four discretization methods for a model\\ convection-diffusion  problem. A unified error analysis was possible because  of our  representation of the optimal norm on the trial spaces for the standard $P^1-P^1$ discretization. %One finding  is that if  a chosen {\it particular discretization} of the {\it  simplified discrete problem}  has unique solution exhibiting non-physical oscillations, then the {\it same chosen discretization for the original problem} with $\epsilon>0$ is likely to produce non-physical oscillations. 
 %Our finite element  analysis  for the considered  model problem showed  that the best method is the upwinding PG method. When compared with  the standard SUPG  or the $P^1-P^2$-SPLS method, we observed that the upwinding PG method can provide higher order of approximation in the optimal  norm. In addition, we  proved  that, due to reformulation as a mixed conforming method,  the upwinding PG method leads to  stability and  good approximability  results under less regularity assumptions for the solution.  

%%%%%%%%%%
%SECTION
%%%%%%%%%%%%%

 \bibliographystyle{plain} 
 %\bibliography{CollectiveBib}
 
% \end{thebibliography}
\def\cprime{$'$} \def\ocirc#1{\ifmmode\setbox0=\hbox{$#1$}\dimen0=\ht0
  \advance\dimen0 by1pt\rlap{\hbox to\wd0{\hss\raise\dimen0
  \hbox{\hskip.2em$\scriptscriptstyle\circ$}\hss}}#1\else {\accent"17 #1}\fi}
  \def\cprime{$'$} \def\ocirc#1{\ifmmode\setbox0=\hbox{$#1$}\dimen0=\ht0
  \advance\dimen0 by1pt\rlap{\hbox to\wd0{\hss\raise\dimen0
  \hbox{\hskip.2em$\scriptscriptstyle\circ$}\hss}}#1\else {\accent"17 #1}\fi}

%%%%%%%%%%%%%%%%%%%%
 \end{document}